\documentclass[12pt,reqno]{amsart}
\usepackage[T1]{fontenc}
\usepackage[utf8]{inputenc}
\usepackage{lmodern}
\usepackage{microtype}
\usepackage{amsmath,amssymb,mathtools}
\usepackage{enumitem}
\usepackage[margin=1in]{geometry}
\usepackage[hidelinks]{hyperref}
\usepackage{url}
\usepackage{color}

\numberwithin{equation}{section}

\newtheorem{theorem}{Theorem}[section]
\newtheorem{proposition}[theorem]{Proposition}
\newtheorem{corollary}[theorem]{Corollary}
\theoremstyle{definition}
\newtheorem{definition}[theorem]{Definition}
\newtheorem{problem}[theorem]{Problem}
\theoremstyle{remark}
\newtheorem*{remark}{Remark}

\newcommand{\Hh}{\mathbb H}
\newcommand{\C}{\mathbb C}
\newcommand{\R}{\mathbb R}
\newcommand{\PSL}{\mathrm{PSL}_2(\mathbb Z)}

\newcommand{\covol}{\operatorname{covolume}}
\newcommand{\Span}{\operatorname{span}}
\newcommand{\ran}{\operatorname{ran}}
\makeatletter
\def\ps@headings{\ps@empty
  \def\@evenhead{%
    \setTrue{runhead}%
    \normalfont\scriptsize
    \rlap{\thepage}\hfil
    \def\thanks{\protect\thanks@warning}%
    \leftmark{}{}\hfil}%
  \def\@oddhead{%
    \setTrue{runhead}%
    \normalfont\scriptsize\hfil
    \def\thanks{\protect\thanks@warning}%
    {\fontsize{8}{10}\selectfont\rightmark{}{}}\hfil\llap{\thepage}}%
  \let\@mkboth\markboth
}
\makeatother
\title[On a question of Jones on tracelike vectors]{On a question of Jones on tracelike vectors, cusp forms, and von Neumann algebras}
\author{Nikolaos Diamantis}
\address{School of Mathematical Sciences
University of Nottingham
Nottingham, NG9 1HB
United Kingdom}
\email{nikolaos.diamantis@nottingham.ac.uk}
\author{Larry Rolen}
\address{Department of Mathematics, 1420 Stevenson Center, Vanderbilt University, Nashville, TN 37240}
\email{larry.rolen@vanderbilt.edu}
\date{}

\begin{document}
\maketitle

\section{Introduction}

The main motivation for this note is to construct a function which enables us to make explicit a rigorous, but previously non-explicit, connection between cusp forms and random matrices Vaughan Jones highlighted in \cite{Jones} (see also \cite{JonesSem}). 

To explain the exact role and importance of this function (which will be called \emph{tracelike}) we briefly outline that connection between cusp forms and random matrices. It consists of three parts. First, a theorem of R\u{a}dulescu asserts that certain Toeplitz operators on weighted Bergman spaces associated with pairs of cusp forms are dense in the commutant of the modular group action. Second, for a specific Bergman parameter, namely $s=13$, Jones's von Neumann-dimension calculation identifies that commutant abstractly with the group
 von Neumann algebra of $\PSL$. Third, Voiculescu's asymptotic-freeness results provide a random-matrix model for this group von Neumann algebra in the sense of convergence of
 trace moments. The missing element for making the connection explicit is in the second part. Specifically, for this part to become explicit an ``tracelike vector'' is required and, as Jones notes \cite{JonesSem}, the connection is not useful. 

 Here we construct such an explicit tracelike vector for all permissible Bergman parameters and all Fuchsian groups of finite hyperbolic covolume. 
 In the process, we will describe the details of the first part of the connection and of the remaining elements of the second part. Both parts have been developed abstractly by Jones \cite{Jones, JonesSem}, but here we present them in a more concrete and detailed form, which may be more helpful for computations.

Since the main result is an explicit construction, in this section we only give an existential form of our theorem referring to later sections for the explicit form of the tracelike vector. To this end, we only introduce the main objects required for this form of the statement.

Let $\Gamma<\mathrm{PSL}_2(\R)$ be a Fuchsian group of finite hyperbolic covolume. For each
 $z\in\C$, write $z=x+iy$ ($x,y\in\R$). Set $\Hh=\{z \in \mathbb C; y>0\}$ for the upper half-plane. The function this note constructs, for a range of real parameters $s$ specified later, is defined as follows.

\begin{definition}\label{def:intro-tracelike}
Let $f\colon\Hh\to\C$ be a non-zero holomorphic function such that
\[
 \int_{\Hh}|f(z)|^2y^{s-2}\,dx\,dy<\infty.
\]
We say that $f$ is tracelike if there is a constant $C>0$ such that
\begin{equation}\label{IdentIntro}
 \sum_{\gamma\in\Gamma}\text{Im}(\gamma z)^s\cdot |f(\gamma z) |^2=C.
\end{equation}
\end{definition}

Jones proved, using an abstract argument, that such a tracelike function exists
 if and only if
\[
 s\leq s_0:=1+\frac{4\pi}{\covol(\Gamma)}.
\]
He also gave a group-theoretic construction of tracelike functions for $s<s_0$. However, the
 most important case, which connects to the random matrix theory, is when $s=s_0$. Jones
 thus posed the following problem.

\begin{problem}[Jones]\label{prob:Jones-intro}
Construct an explicit tracelike function for $s=s_0$.
\end{problem}

Our main result can be stated as follows.

\begin{theorem}\label{thm:intro-main}
Let $\Gamma<\mathrm{PSL}_2(\R)$ be any Fuchsian group of finite hyperbolic covolume. There is an explicit tracelike vector, defined in \eqref{eq:xi-definition},
 for any $1<s\leq s_0=1+4\pi/\covol(\Gamma)$.
\end{theorem}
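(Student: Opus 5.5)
The plan is to work in the weighted Bergman space $H_s=\{f \text{ holomorphic on }\Hh:\int_{\Hh}|f|^2y^{s-2}dx\,dy<\infty\}$, with the unitary action $(\pi_s(\gamma)f)(z)=f(\gamma^{-1}z)\,(\gamma^{-1})'(z)^{s/2}$ (weight $s$, with a multiplier system if $s$ is not an even integer). Then $\mathrm{Im}(\gamma z)^s|f(\gamma z)|^2=y^s|(\pi_s(\gamma^{-1})f)(z)|^2$, so \eqref{IdentIntro} says that the function $y^s\sum_\gamma|(\pi_s(\gamma)f)(z)|^2$ is constant. This has a reproducing-kernel meaning. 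If $\{\pi_s(\gamma)f\}_{\gamma\in\Gamma}$ is an orthogonal family with common norm $\|f\|^2$ whose closed span is all of $H_s$ (a ``wandering'' or Parseval-type vector), then expanding the reproducing kernel $K_s(z,w)=c_s\,\big((z-\bar w)/2i\big)^{-s}$ in this basis gives
\[
\sum_\gamma|(\pi_s(\gamma)f)(z)|^2=\|f\|^2K_s(z,z)=\|f\|^2c_s y^{-s}.
\]
So I aim to construct $\xi$ such that $\{\pi_s(\gamma)\xi\}$ is an orthonormal basis at $s=s_0$, and an orthonormal family spanning a subspace (or a tight frame) for $s<s_0$.

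\textbf{Step 1 (construction).} Fix a fundamental domain $F$ for $\Gamma$ and let $P_F$ be multiplication by $\mathbf 1_F$ on $L^2(\Hh,y^{s-2}dxdy)$. Let $P$ be the Bergman projection, with kernel $K_s$. Since $L^2(\Hh)=\bigoplus_\gamma \mathbf 1_{\gamma F}L^2$, the space $L^2(\Hh)$ is a free $\ell^2(\Gamma)$-module, and $H_s$ is a $\Gamma$-invariant subspace of von Neumann dimension
\[
\dim_\Gamma H_s=\int_F K_s(z,z)y^{s}\,\frac{dxdy}{y^2}=c_s\covol(\Gamma)=\frac{(s-1)\covol(\Gamma)}{4\pi},
\]
which is at most $1$ exactly when $s\le s_0$. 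To make this explicit I take $\eta=P\,\mathbf 1_F\, y^{-s}\cdot(\text{suitable weight})$, or more canonically work with the positive operator $T=\sum_\gamma \pi_s(\gamma)\,|\eta\rangle\langle\eta|\,\pi_s(\gamma)^*$. This $T$ commutes with $\Gamma$ and equals the Toeplitz operator with symbol $\sum_\gamma|\eta|^2\circ\gamma$ (suitably weighted). I then define
\[
\xi=T^{-1/2}\eta.
\]
This is the standard Gram–Schmidt/polar symmetrization, and it makes $\{\pi_s(\gamma)\xi\}$ a Parseval frame for the closure of the span of the orbit of $\eta$. My choice for \eqref{eq:xi-definition} is concrete: take $\eta(z)=P(\mathbf 1_F y^{-s})$, i.e. $\eta(w)=\int_F K_s(w,z)\,y^{s-2}\cdot y^{-s}\,dxdy$ up to normalization, which is the ``compression'' of the kernel over $F$.

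\textbf{Step 2 (cyclicity).} I must show that the orbit of $\eta$ spans $H_s$ densely. A vector orthogonal to every $\pi_s(\gamma)\eta$ gives a function $g$ with $\int_{\gamma F}g\,y^{-s}\cdots=0$ for all $\gamma$, so its periodization vanishes, and the Rădulescu density of Toeplitz operators in the commutant should rule out such $g$. The Parseval frame property then gives constancy of the sum exactly as in the kernel computation. The trace of the frame operator gives $\|\xi\|^2=\dim_\Gamma H_s$, so $C=c_s\dim_\Gamma H_s>0$.

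\textbf{Step 3 (the case $s=s_0$).} At $s=s_0$ the dimension is $1$. A Parseval frame indexed by $\Gamma$ with $\|\xi\|^2=1$ has orthonormal vectors, so it is a genuine orthonormal basis, and $\xi$ is a trace vector for the commutant. For $s<s_0$ the frame is only tight, but \eqref{IdentIntro} still holds, and this suffices for the theorem.

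\textbf{Main obstacle.} The hardest step is showing that $T$ is bounded and invertible, so that $T^{-1/2}$ is explicit and bounded. I expect to handle it by showing $T$ is a Toeplitz operator whose symbol $\sum_\gamma|\eta\circ\gamma|^2 y^s$ is bounded above and below on $F$. Boundedness below is problematic near cusps, and there I would first try modifying $\eta$ by cusp-localized corrections.
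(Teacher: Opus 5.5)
\paragraph{Gap: cyclicity of your seed vector.}
Your architecture is the paper's: take the orbit of an explicit seed $\eta$, form the frame operator $Th=\sum_\gamma\langle h,\pi_s(\gamma)\eta\rangle_s\,\pi_s(\gamma)\eta$, and set $\xi=T^{-1/2}\eta$. With the seed $\kappa_s=K_{s,z_0}$ one has $T=\mathcal C\mathcal C^*$, and $\xi_s=V\delta_e$ is exactly the vector with $|\mathcal C^*|\xi_s=\kappa_s$.

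The gap is Step~2. The normalisation only gives $\sum_\gamma|\pi_s(\gamma)\xi(z)|^2=\|P_{\mathcal K}K_{s,z}\|_s^2$, where $\mathcal K$ is the closed span of the orbit, so everything hinges on your particular $\eta$ being cyclic. For $\eta=P(\mathbf 1_F y^{-s})$, the condition $h\perp\pi_s(\gamma)\eta$ for all $\gamma$ says only that $\int_F (h|_s\gamma)(z)\,y^{-2}\,dx\,dy=0$ for every $\gamma\in\Gamma$. You give no argument that this forces $h=0$.

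The bound $\dim_\Gamma A^2_{s-2}(\Hh)\le 1$ only shows that \emph{some} cyclic vector exists; that is Jones's non-explicit argument. R\u{a}dulescu's density cannot substitute for a proof. The vector $\eta$ is cyclic for $M_s$ iff it is separating for $M_s'$, and $2$-norm density of cusp-form Toeplitz operators in $M_s'$ does not rule out a non-zero element of $M_s'$ annihilating $\eta$ (for instance the projection onto $\mathcal K^\perp$). Moreover, that density is only invoked for $\PSL$, while the theorem covers every finite-covolume $\Gamma$.

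The missing idea is the paper's choice of seed. Take $\kappa_s=K_{s,z_0}$ with $z_0$ having trivial stabiliser. Then \eqref{eq:kappa-evaluation} gives $\langle h,\pi_s(\gamma)\kappa_s\rangle_s=0$ iff $h(\gamma z_0)=0$. So cyclicity is exactly Theorem~\ref{thm:Jones-zero}: for $s\le s_0$ no non-zero $h$ vanishes on $\Gamma z_0$. Boundedness of $\mathcal C$ then follows from the local estimate on disjoint discs.

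Separately, if $\Gamma$ has two inequivalent cusps, then $\mathbf 1_F y^{-s}\notin L^2(y^{s-2}\,dx\,dy)$. Indeed $\int_F y^{-s}\,dx\,dy/y^2$ diverges at a cusp of $F$ lying on $\R$, so your $\eta$ is not even defined there.

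\paragraph{Your ``main obstacle''.}
This obstacle is both unnecessary and misdiagnosed. You do not need $T$ to be invertible. Let $\mathcal C_\eta\delta_\gamma:=\pi_s(\gamma)\eta$ with polar decomposition $\mathcal C_\eta=V|\mathcal C_\eta|$. Then $\eta=|\mathcal C_\eta^*|V\delta_e\in\ran T^{1/2}$, and $T^{1/2}$ is injective once $\eta$ is cyclic. Hence $T^{-1/2}\eta=V\delta_e$ is well defined even if $T^{-1/2}$ is unbounded. Parseval then follows from $VV^*=I$ and $V\lambda^{\sigma_s}_\alpha=\pi_s(\alpha)V$, which is precisely the paper's proof.

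Nor should you expect invertibility at $s=s_0$. For the kernel seed, $\ker\mathcal C=0$ there by Corollary~\ref{cor:critical-unitary}. An invertible $\mathcal C\mathcal C^*$ would then make $\Gamma z_0$ a complete interpolating sequence for $A^2_{s_0-2}(\Hh)$, which Seip's density theorems rule out.

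Finally, $T$ is not the Toeplitz operator with symbol $y^s\sum_\gamma|\pi_s(\gamma)\eta|^2$. That function is a multiple of the Berezin transform $\langle Tk_z,k_z\rangle_s$. Two-sided bounds on a Berezin transform do not make $T$ bounded below; the kernel seed at $s=s_0$ for cocompact $\Gamma$ is a counterexample.
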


Conceptually, the tracelike equation \eqref{IdentIntro} satisfied by our explicit vector is a Parseval identity in a Hilbert space of sequences indexed by $\Gamma$, denoted by $\ell^2(\Gamma)$. The heart of the construction allowing us to apply Parseval is a correspondence $\mathcal C$ we establish between the space $A_{s-2}^2(\mathbb H)$ of functions into which our vector belongs and $\ell^2(\Gamma)$. Thanks to this correspondence, we translate our problem to a problem in a ``discrete'' setting which turns out to be easier to work in. Indeed, the explicit tracelike we find is the image of the sequence $(1, 0, 0, \dots) \in \ell^2(\Gamma)$ under a ``piece'' of $\mathcal{C}$. The main challenge was to find a correspondence $\mathcal C$ mirroring, as well as possible for our purposes, the properties of $A_{s-2}^2(\mathbb H)$ to those of $\ell^2(\Gamma)$, including the compatibility of $\Gamma$-actions and boundedness. Although the correspondence $\mathcal C$ we define cannot be a unitary map, it does have a ``piece'', namely the partial isometry of the polar decomposition of $\mathcal C$, which has exactly the properties we require. 

The construction of the tracelike vector we will present is the outcome of a conversation with ChatGPT. We had originally solved a weaker version of the problem using an analogue of Cohen's kernel. It was weaker in that the function constructed was not holomorphic. We first asked ChatGPT whether it is possible to modify the weaker solution we had found so that we obtain a holomorphic tracelike vector. After trying some modifications that we suggested for its partial solutions, it argued that it is unlikely that a holomorphic function could be built from the solution to the weaker problem. We then asked whether a solution could be expressed in terms of an explicit basis of the Bergman space. ChatGPT first tried an ansatz that turned out to be too small. We then asked whether a direct approach as an infinite system was possible, and it produced an explicit infinite semidefinite/rank-one system. After trying a variety of methods to solve the system, including recursive characterisations, it settled on an explicit orbit of a Bergman reproducing kernel and its ``polar normalisation''. Upon our careful inspection of the construction, it recognised that some points were either not sufficiently explained or incorrect. After the end of the checking and correcting process, we obtained the construction below.

An independent paper \cite{Abreu} appeared shortly before the present manuscript. Both papers employ a closely related polar normalisation of a kernel orbit to obtain the Jones tracelike vector at the critical parameter. The novelty of the present work lies in its extension to all $1<s\leq s_0$ for every finite-covolume Fuchsian group and in the detailed analysis of the resulting von Neumann-algebra and cusp-form correspondences.

The paper is organised as follows. In Section~\ref{Parts12}, we review the operator-theoretic connection between cusp forms and random matrix models, describing concretely the first two parts of this connection. The correspondence $\mathcal C$ is introduced in Section \ref{C}, where its basic properties are established. The partial isometry is defined and analysed in Section \ref{partisom}. The explicit tracelike vector is given in Section \ref{tracelikev} where its defining equation \eqref{IdentIntro} is established.

\section*{Acknowledgments}
The authors thank Don Zagier for frequent, and highly useful, discussions on the mathematics and exposition in this paper.

\section{The first two parts of the connection between cusp forms and random matrices }\label{Parts12}

\subsection{Bergman spaces and tracelike vectors}\label{BergmanSpaces}
This subsection deals mainly with elements of the following space.
\begin{definition}\label{def:Bergman}
For $s>1$, the (weighted) Bergman space $A^2_{s-2}(\Hh)$ is the Hilbert space of
 holomorphic functions $h$ on $\Hh$ for which
\[
 \|h\|_s^2:=\int_{\Hh}|h(z)|^2y^{s-2}\,dx\,dy<\infty.
\]
\end{definition}
The inner product on $A^2_{s-2}(\Hh)$ is given by
\[
 \langle h_1,h_2\rangle_s
 :=\int_{\Hh}h_1(z)\overline{h_2(z)}y^{s-2}\,dx\,dy.
\]
Let $\Gamma<\mathrm{PSL}_2(\R)$ be a Fuchsian group of finite hyperbolic covolume
\[
 \covol(\Gamma):=\int_{\Gamma\backslash\Hh}\frac{dx\,dy}{y^2}.
\]
The following is crucial for what follows. 
As usual, $\gamma z$ denotes the M\"obius transformation action of a matrix $\gamma\in\Gamma$ on a point $z\in\mathbb H$. We also recall the fundamental invariant 
\[
 s_0:=1+\frac{4\pi}{\covol(\Gamma)}.
\]
\begin{theorem}[{\cite{Jones}, Theorem 1.1}]\label{thm:Jones-zero}
Suppose that $z_0\in\Hh$ has trivial stabiliser in $\Gamma$. Then there is a non-zero
 $h\in A^2_{s-2}(\Hh)$ such that $h(\gamma z_0)=0,$ for all $\gamma\in\Gamma$, if and only if
\[
 s>s_0.
\]
\end{theorem}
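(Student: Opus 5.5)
The plan is to argue through von Neumann dimension, following Jones. Let $\pi_s$ be the weight-$s$ (projective) discrete series action of $\mathrm{PSL}_2(\R)$ on $A^2_{s-2}(\Hh)$, $(\pi_s(g)h)(z)=j(g^{-1},z)^{-s}h(g^{-1}z)$, where $j$ is the usual automorphy factor with a fixed branch for non-integral $s$. The representation is square integrable for $s>1$. With Haar measure normalised as $dx\,dy/y^2$, its formal degree is $d_s=(s-1)/(4\pi)$. By the Atiyah--Schmid / Goodman--de la Harpe--Jones formula, the restriction to $\Gamma$ makes $A^2_{s-2}(\Hh)$ a module over the group von Neumann algebra $L(\Gamma)$ with
\[
 \dim_{L(\Gamma)}A^2_{s-2}(\Hh)=d_s\cdot\covol(\Gamma)=\frac{(s-1)\covol(\Gamma)}{4\pi},
\]
which is $>1$ exactly when $s>s_0$.

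Next I would introduce the evaluation map $T\colon A^2_{s-2}(\Hh)\to\ell^2(\Gamma)$,
\[
 (Th)(\gamma)=\operatorname{Im}(\gamma z_0)^{s/2}\,h(\gamma z_0)\cdot(\text{unimodular cocycle factor}).
\]
It satisfies $\langle h,\pi_s(\gamma)k_{z_0}\rangle_s=c\,(Th)(\gamma)$, where $k_{z_0}$ is the normalised reproducing kernel at $z_0$. Boundedness of $T$ comes from a Bessel-type estimate: the orbit $\Gamma z_0$ is uniformly discrete because the stabiliser is trivial, so the sub-mean-value inequality on disjoint hyperbolic discs of fixed radius bounds $\sum_\gamma|(Th)(\gamma)|^2$ by a constant times $\|h\|_s^2$. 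With the cocycle chosen correctly, $T$ intertwines $\pi_s|_\Gamma$ with the left regular representation. Then $\ker T$ is precisely the space of $h$ vanishing on $\Gamma z_0$, and $(\ker T)^\perp=M:=\overline{\Span}\{\pi_s(\gamma)k_{z_0}\}$.

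For the direction $s>s_0$: if $\ker T=0$, the partial isometry $V$ in the polar decomposition $T=V|T|$ is an equivariant isometry onto a closed $\Gamma$-invariant subspace of $\ell^2(\Gamma)$. Monotonicity of von Neumann dimension then gives $\dim_{L(\Gamma)}A^2_{s-2}\le\dim_{L(\Gamma)}\ell^2(\Gamma)=1$, contradicting $s>s_0$. Hence a nonzero $h$ vanishing on the orbit exists.

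The converse, $s\le s_0$, is the main obstacle. Here I must show $M=A^2_{s-2}(\Hh)$, i.e.\ that $k_{z_0}$ is cyclic for $\Gamma$. Equivalently, $k_{z_0}$ must be separating for the commutant $\pi_s(\Gamma)'$, which is a finite von Neumann algebra since the dimension is $\le1$. The dimension count alone only gives $\dim M\le\dim A^2_{s-2}\le 1$. My first attempt would be to show that $\dim_{L(\Gamma)}M$ equals the full dimension. Via the polar part $V$, $M$ is isomorphic to $\overline{\ran T}\subset\ell^2(\Gamma)$, and $\dim\overline{\ran T}=\tau(P)$ where $P$ is the projection onto $\overline{\ran T}$. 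The goal is then $\tau(P)=d_s\covol(\Gamma)$; I would try to obtain it by computing $\tau(T T^*)$-type traces with the reproducing kernel, using $\sum_\gamma$ over a fundamental domain of the Bergman kernel diagonal $=d_s\covol(\Gamma)$. If this gives only an inequality, I would fall back on a holomorphy argument. Namely, if $0\ne h\in M^\perp$ for $s\le s_0$, then dividing $h$ by a suitable Blaschke-type product vanishing on the orbit (using Seip-type density of $\Gamma z_0$, which has critical density $s_0$) should produce a nonzero function in a smaller-weight space that is still $\Gamma$-equivariantly controlled. This would contradict the count in the previous paragraph.
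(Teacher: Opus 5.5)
The paper does not prove this statement; it quotes it from Jones, so your proposal has to stand on its own.

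\textbf{The direction $s>s_0$ is sound.} The disjoint-disc estimate makes $T$ bounded. If $\ker T=0$, its polar part would embed $A^2_{s-2}(\Hh)$ equivariantly into $\ell^2(\Gamma)$, contradicting $\dim=(s-1)\covol(\Gamma)/4\pi>1$. For general $\Gamma$ and non-integral $s$, $T$ intertwines $\pi_s$ with the $\sigma_s$-twisted regular representation, so dimensions should be taken over $L_{\sigma_s}(\Gamma)$; the argument is otherwise unchanged.

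\textbf{The gap is the converse:} that for $s\le s_0$ no non-zero $h$ vanishes on $\Gamma z_0$. This is the direction the paper actually uses (Proposition~\ref{prop:kernel-complete} and \eqref{eq:C-dense-range}), its substance is the critical case $s=s_0$, and you leave it open.

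Your first strategy cannot close it. $\tau(TT^*)=\|T^*\delta_e\|^2$ is a fixed constant coming from the kernel diagonal. It sees the operator $TT^*$, not its range projection $P$. It would determine $\tau(P)$ only if $TT^*$ were a multiple of $P$, i.e.\ if the kernel orbit were already a tight frame. That fails in general, which is exactly why the paper passes to the polar part $V$. Averaging over $z_0$ only yields $\int_{\Gamma\backslash\Hh}T_z^*T_z\,d\mu(z)=I$, with $T_z$ built from $z$. This identity holds for every $s>1$, including $s>s_0$ where every $\ker T_z\neq0$. So no trace identity of this kind can force $\ker T_{z_0}=0$.

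\textbf{Your fallback is too vague to verify and points to the wrong tools.}
\begin{itemize}
\item Density theorems of Seip type decide only the non-critical cases. For $s<s_0$ the orbit is sampling, hence a uniqueness set, with no division needed. At $s=s_0$ it is neither sampling nor interpolating, and they say nothing.
\item A Blaschke-type quotient carries no $\Gamma$-equivariance and no definite loss of weight.
\item The contradiction you need is not with any dimension count.
\end{itemize}

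\textbf{What works is division by an automorphic form.} Let $k=s_0-1=4\pi/\covol(\Gamma)$. By the valence formula, this is the weight at which a form has exactly one zero per fundamental domain. Take $\phi$ holomorphic of weight $k$, with unitary multiplier, whose zero set is $\Gamma z_0$ with all zeros simple, and which does not vanish at the cusps.
\begin{itemize}
\item For $\PSL$, take $\phi=E_4^3-j(z_0)\Delta$.
\item In general, define $\phi$ by $\log(y^{k/2}|\phi|)=u$. Here $u$ is $\Gamma$-invariant, satisfies $(\partial_x^2+\partial_y^2)u=2\pi\sum_\gamma\delta_{\gamma z_0}-\tfrac{k}{2}y^{-2}$, and $u-\tfrac{k}{2}\log y$ is bounded in each cusp coordinate. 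This is solvable because the total mass $2\pi-\tfrac{k}{2}\covol(\Gamma)$ vanishes.
\end{itemize}
Then $y^k|\phi|^2$ is $\Gamma$-invariant and bounded below off small discs around the orbit. On the discs, the local mean-value estimate, uniform by equivariance, takes over. Hence for $h\in A^2_{s-2}(\Hh)$ vanishing on $\Gamma z_0$ one gets $\int_{\Hh}|h/\phi|^2y^{s-k-2}\,dx\,dy\ll\|h\|_s^2$.

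For $s\le s_0$ the exponent is $t-2$ with $t=s-k\le1$. By Paley--Wiener, no non-zero holomorphic $g$ on $\Hh$ has $\int_{\Hh}|g|^2y^{t-2}\,dx\,dy<\infty$ when $t\le1$. Hence $h=0$.
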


\begin{remark}
Note that for $\Gamma=\PSL$, we have $s_0=13$. Thus, a $\Gamma$-orbit is contained in the
 zero set of an element of $A^2_{s-2}(\Hh)$ precisely when $s>13$.
\end{remark}

Another key object for both the context and our construction is the reproducing-kernel
 $K_{s,w}$ at $w\in\Hh$, given by
\begin{equation}\label{eq:kernel}
 K_{s,w}(z):=\frac{s-1}{4\pi}
 \left(\frac{z-\overline w}{2i}\right)^{-s}.
\end{equation}
Its defining property, which can deduced, e.g., by \cite[Proposition~4.7(2)]{Jones}) is that for any
 $h\in A^2_{s-2}$ we can recover its values $h(w)$ through
\begin{equation}\label{eq:reproducing}
 h(w)=\langle h,K_{s,w}\rangle_s, \qquad \text{for $w \in \mathbb H.$}
\end{equation}
\begin{proposition}\label{prop:kernel-complete}
Let $z_0\in\Hh$ have a trivial stabiliser in $\Gamma$. We have
\begin{equation}\label{eq:kernel-complete}
 \overline{\Span}\{K_{s,\gamma z_0}:\gamma\in\Gamma\}=A^2_{s-2}(\Hh)
\end{equation}
if and only if $s\leq s_0.$
\end{proposition}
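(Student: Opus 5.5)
The plan is to reduce the density statement to Theorem~\ref{thm:Jones-zero} through the standard duality between closed spans and orthogonal complements in a Hilbert space. In $A^2_{s-2}(\Hh)$, the closed span $\overline{\Span}\{K_{s,\gamma z_0}:\gamma\in\Gamma\}$ is all of $A^2_{s-2}(\Hh)$ if and only if its orthogonal complement is $\{0\}$. So I will first identify that complement. By the reproducing property \eqref{eq:reproducing}, an element $h\in A^2_{s-2}(\Hh)$ is orthogonal to every $K_{s,\gamma z_0}$ exactly when
\[
 h(\gamma z_0)=\langle h,K_{s,\gamma z_0}\rangle_s=0 \qquad\text{for all }\gamma\in\Gamma.
\]
Hence the orthogonal complement of the span is precisely the subspace of functions in $A^2_{s-2}(\Hh)$ that vanish on the orbit $\Gamma z_0$.

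With this identification, both directions follow at once from Theorem~\ref{thm:Jones-zero}. Fix $z_0$ with trivial stabiliser.

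\emph{If $s\le s_0$:} the theorem says no non-zero $h\in A^2_{s-2}(\Hh)$ vanishes on $\Gamma z_0$. So the complement is trivial, and the closed span is the whole space, which gives \eqref{eq:kernel-complete}.

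\emph{If $s>s_0$:} the theorem supplies a non-zero $h$ vanishing on $\Gamma z_0$. This $h$ lies in the complement, so the closed span is a proper closed subspace.

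For completeness, I will check two small points.
\begin{itemize}
\item $K_{s,w}\in A^2_{s-2}(\Hh)$ for $s>1$. This is implicit in \eqref{eq:reproducing}, and it also follows from the direct estimate $\|K_{s,w}\|_s^2=K_{s,w}(w)=\frac{s-1}{4\pi}(\operatorname{Im}w)^{-s}<\infty$.
\item The identity $\langle h, K_{s,w}\rangle_s = h(w)$ is linear in $h$ and conjugate-linear in the kernel, so orthogonality to the span is the same as orthogonality to each generator $K_{s,\gamma z_0}$.
\end{itemize}

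No genuine obstacle is expected here, since all the analytic content is in Jones's theorem. The only care needed is the bookkeeping step of passing from the span to its closure, $(\overline{\Span}\,S)^\perp=S^\perp$, and making sure the Hilbert-space inner product convention matches the one used in \eqref{eq:reproducing}.
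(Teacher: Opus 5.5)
Your proposal is correct and follows the paper's proof: you identify the orthogonal complement of the kernel orbit with the functions in $A^2_{s-2}(\Hh)$ that vanish on $\Gamma z_0$ via the reproducing property, and then read off both directions from Theorem~\ref{thm:Jones-zero}. One small quibble with your first check: the identity $\|K_{s,w}\|_s^2=K_{s,w}(w)$ already applies the reproducing property to $h=K_{s,w}$, so it cannot establish membership on its own. An independent check is the direct computation that $\int_{\Hh}|z-\overline w|^{-2s}y^{s-2}\,dx\,dy<\infty$ for $s>1$.
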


\begin{proof}
By \eqref{eq:reproducing}, an element of $A^2_{s-2}(\Hh)$ is orthogonal to every $K_{s,\gamma z_0}$ precisely when it
 vanishes at every point $\gamma z_0$. The result is therefore implied by Theorem~\ref{thm:Jones-zero}.
\end{proof}

We now consider the (projective) action $\pi_s$ of $\Gamma$ on functions in $A^2_{s-2}$. Throughout, the
 non-integral powers will be interpreted via the principal logarithm. For every $\gamma\in\Gamma$,
 choose the representative
\[
 \begin{pmatrix}a_\gamma&b_\gamma\\ c_\gamma&d_\gamma\end{pmatrix}\in\mathrm{SL}_2(\R)
\]
for which either $c_\gamma>0$, or $c_\gamma=0$ and $d_\gamma>0$. For real $s>1$, we define
 $\pi_s$ in terms of the Petersson slash action:
\[
 \pi_s(\gamma)h:=h|_s\gamma^{-1},
 \quad\text{where }(h|_s\gamma)(z):=(c_\gamma z+d_\gamma)^{-s}h(\gamma z).
\]
It is classical fact (see, e.g.\ \cite[Section~2.6]{Iwaniec}) that $\pi_s(\gamma)$ is unitary and satisfies
\begin{equation}\label{eq:projective-law}
 \pi_s(\alpha)\pi_s(\beta)=\sigma_s(\alpha,\beta)\pi_s(\alpha\beta)
 \quad\text{for some }\sigma_s(\alpha,\beta) \in \mathbb T.
\end{equation}
Here, $\mathbb T:=\{z \in \mathbb C; |z|=1\}.$ We can now restate Definition~\ref{def:intro-tracelike} as follows.

\begin{definition}\label{def:tracelike}
The vector $0\neq\xi\in A^2_{s-2}(\Hh)$ is tracelike if for some constant $C_\xi>0$ we have
\begin{equation}\label{eq:tracelike}
 \sum_{\gamma\in\Gamma}|\pi_s(\gamma)\xi(z)|^2=C_\xi y^{-s}.
\end{equation}
\end{definition}

Following Jones's ideas, we establish the following useful formulation.

\begin{proposition}\label{prop:tracelike-frame}
A non-zero vector $\xi\in A^2_{s-2}(\Hh)$ is tracelike with constant $C_\xi$ if and only if
\begin{equation}\label{eq:tight-frame}
 \sum_{\gamma\in\Gamma}\bigl|\langle h,\pi_s(\gamma)\xi\rangle_s\bigr|^2
 =\frac{4\pi C_\xi}{s-1}\cdot\|h\|_s^2
 \qquad\text{for all }h\in A^2_{s-2}(\Hh).
\end{equation}
Moreover, if $0\neq\xi$ is tracelike, we have
\begin{equation}\label{eq:tracelike-norm}
 \|\xi\|_s^2=C_\xi\cdot\covol(\Gamma)
\end{equation}
When $s=s_0$, the tracelike vector can be normalised so that its $\Gamma$-orbit forms an orthonormal
 basis of $A^2_{s-2}(\Hh)$.
\end{proposition}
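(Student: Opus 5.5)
The plan has three steps: prove the frame identity \eqref{eq:tight-frame} from \eqref{eq:tracelike} by unfolding over a fundamental domain, read off \eqref{eq:tracelike-norm} from the same computation, and get the orthonormal-basis claim at $s=s_0$ by comparing von Neumann dimensions.

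\textbf{Tracelike implies frame.} The key pointwise fact is
\[
\overline{\pi_s(\gamma)\xi(z)}\,y^{s}=\overline{\xi(\gamma^{-1}z)}\,\mathrm{Im}(\gamma^{-1}z)^{s}\cdot(\text{unimodular factor}).
\]
So $|\pi_s(\gamma)\xi(z)|^2y^s=|\xi(\gamma^{-1}z)|^2\mathrm{Im}(\gamma^{-1}z)^s$, and \eqref{eq:tracelike} is exactly \eqref{IdentIntro}. Fix $h$ and write $\langle h,\pi_s(\gamma)\xi\rangle_s=\langle \pi_s(\gamma)^{*}h,\xi\rangle_s$; up to a phase this is $\langle \pi_s(\gamma^{-1})h,\xi\rangle_s$. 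I would then argue through the operator
\[
T:h\mapsto(\langle h,\pi_s(\gamma)\xi\rangle_s)_{\gamma}\in\ell^2(\Gamma).
\]
Equivalently, I would show that the frame operator $S=\sum_\gamma \pi_s(\gamma)\xi\otimes\overline{\pi_s(\gamma)\xi}$ is a multiple of the identity.

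The cleanest route is through kernels. The operator $S$ commutes with $\pi_s(\Gamma)$, because the cocycle phases cancel in the rank-one projections. Its integral kernel is
\[
S(z,w)=\sum_\gamma \pi_s(\gamma)\xi(z)\overline{\pi_s(\gamma)\xi(w)},
\]
which is holomorphic in $z$ and antiholomorphic in $w$. On the diagonal it equals $C_\xi y^{-s}$ by \eqref{eq:tracelike}. The reproducing kernel also satisfies $K_{s,z}(z)=\frac{s-1}{4\pi}y^{-s}$. A sesquiholomorphic function is determined by its diagonal values (polarization, or the identity theorem for $F(z,\bar w)$). Hence
\[
S(z,w)=\frac{4\pi C_\xi}{s-1}K_{s,w}(z),
\]
that is, $S=\frac{4\pi C_\xi}{s-1}I$. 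This is \eqref{eq:tight-frame}.

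\textbf{Frame implies tracelike.} Conversely, if \eqref{eq:tight-frame} holds, apply it to $h=K_{s,z}$. Using $\langle K_{s,z},\pi_s(\gamma)\xi\rangle_s=\overline{\pi_s(\gamma)\xi(z)}$ and $\|K_{s,z}\|_s^2=K_{s,z}(z)=\frac{s-1}{4\pi}y^{-s}$ gives \eqref{eq:tracelike} directly. This direction is immediate, so I would present it first.

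\textbf{Norm identity.} Integrate \eqref{IdentIntro}, written as $y^s\sum_\gamma|\pi_s(\gamma)\xi(z)|^2=C_\xi$, against $dx\,dy/y^2$ over a fundamental domain $\mathcal F$. Unfolding with Tonelli (everything is nonnegative) gives
\[
\int_{\Hh}|\xi|^2y^{s-2}\,dx\,dy=C_\xi\,\covol(\Gamma),
\]
which is \eqref{eq:tracelike-norm}.

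\textbf{Orthonormal basis at $s=s_0$.} Normalise so that $\frac{4\pi C_\xi}{s-1}=1$; then $\{\pi_s(\gamma)\xi\}$ is a Parseval frame. By \eqref{eq:tracelike-norm},
\[
\|\xi\|_s^2=\frac{(s-1)\covol(\Gamma)}{4\pi}.
\]
At $s=s_0$ we have $s_0-1=4\pi/\covol(\Gamma)$, so $\|\xi\|_s=1$. A Parseval frame of unit vectors is an orthonormal basis: for each $\gamma_0$,
\[
1=\|\pi_s(\gamma_0)\xi\|^2=\sum_\gamma|\langle \pi_s(\gamma_0)\xi,\pi_s(\gamma)\xi\rangle|^2=1+\sum_{\gamma\ne\gamma_0}|\cdots|^2,
\]
so all cross terms vanish. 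The vectors $\pi_s(\gamma)\xi$ are distinct because they are pairwise orthogonal.

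\textbf{Main obstacle.} The delicate step is making the kernel identification rigorous. One must check that $\sum_\gamma\pi_s(\gamma)\xi(z)\overline{\pi_s(\gamma)\xi(w)}$ converges locally uniformly, which follows from Cauchy--Schwarz and the diagonal bound. One must then identify this series with the kernel of the bounded operator $S$, whose boundedness comes from the diagonal bound via Schur or reproducing-kernel arguments. After that, the determination of a sesquiholomorphic function by its diagonal is standard.
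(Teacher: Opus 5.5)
Your proposal matches the paper's proof essentially step for step: testing the frame identity on $K_{s,z}$, identifying the sesquiholomorphic series $F(z,w)$ with $\frac{4\pi C_\xi}{s-1}K_{s,w}(z)$ from its diagonal, unfolding for the norm, and deducing orthonormality at $s=s_0$ from a unit-norm Parseval frame (your use of $h=\pi_s(\gamma_0)\xi$ even avoids the paper's appeal to the projective law). The one thing to tidy is your ``main obstacle'': no a priori boundedness of $S$ is needed (and Schur's test with the Cauchy--Schwarz majorant of $F$ would not give it), because once $F=\frac{4\pi C_\xi}{s-1}K$ is known, expanding $\sum_\gamma|\langle h,\pi_s(\gamma)\xi\rangle_s|^2$ for $h=\sum_j c_jK_{s,w_j}$ yields the frame identity on a dense subspace, which then extends to all $h$, exactly as the paper does.
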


\begin{proof}
By \eqref{eq:kernel} and \eqref{eq:reproducing}, we have
\begin{equation}\label{eq:kernel-diagonal}
 \|K_{s,z}\|_s^2=K_{s,z}(z)=\frac{s-1}{4\pi}y^{-s}.
\end{equation}
If \eqref{eq:tight-frame} holds, then plugging in $h=K_{s,z}$ gives \eqref{eq:tracelike}.
 Conversely, if  \eqref{eq:tracelike} holds, then by \eqref{eq:kernel-diagonal}, we have
\begin{equation}\label{eq:diagonal-series}
 \sum_{\gamma\in\Gamma}|\pi_s(\gamma)\xi(z)|^2
 =\frac{4\pi C_\xi}{s-1}K_{s,z}(z).
\end{equation}
Using the Cauchy--Schwarz inequality, together with (locally) uniform convergence of the diagonal series, we find that
\[
 F(z,w):=\sum_{\gamma\in\Gamma}\pi_s(\gamma)\xi(z)
 \overline{\pi_s(\gamma)\xi(w)}
\]
converges locally uniformly. Hence, $F(z,w)$ is holomorphic in $z$ and anti-holomorphic in $w$.
 Because of this, $F(z,w)$ is uniquely determined by its restriction to the diagonal $w=z$.
 On the other hand, by \eqref{eq:diagonal-series}, this restriction is $F(z,z)=\frac{4\pi C_\xi}{s-1}\cdot K_{s,z}(z)$.
 Hence,
\[
 F(z,w)=\frac{4\pi C_\xi}{s-1}K_{s,w}(z).
\]
Choosing $h=\sum_{j=1}^N c_jK_{s,w_j}$, it follows that
\[
 \sum_{\gamma\in\Gamma}|\langle h,\pi_s(\gamma)\xi\rangle_s|^2
 =\frac{4\pi C_\xi}{s-1}\sum_{j,k}c_j\overline{c_k}K_{s,w_j}(w_k)
 =\frac{4\pi C_\xi}{s-1}\|h\|_s^2.
\]
Since, by Proposition \ref{prop:kernel-complete}, finite linear combinations of reproducing kernels are dense in $A^2_{s-2}(\Hh)$, the same
 identity holds for every $h\in A^2_{s-2}(\Hh)$, proving \eqref{eq:tight-frame}. 
 Thus, \eqref{eq:tracelike} and \eqref{eq:tight-frame} are equivalent. 
 
 Now for any non-zero tracelike vector, multiplying \eqref{eq:tracelike} with $y^s$ and integrating over a fundamental domain with respect to
 the $\Gamma$-invariant measure $dx\,dy/y^2$, we can unfold to deduce \eqref{eq:tracelike-norm}.
 
 Finally, when $s=s_0$, we can normalise the tracelike vector so that $\|\xi\|_{s_0}^2=1$.
 Since the identity term already has size one, all off-identity inner products vanish. The projective law changes the other off-diagonal orbit inner products only by unit phases, so they vanish as well; completeness follows from \eqref{eq:tight-frame}.
\end{proof}

\subsection{von Neumann algebras}
We begin by recalling the definition of von Neumann algebras.

\begin{definition}\label{def:von-Neumann}
Let $\mathcal K$ be a Hilbert space and let $B(\mathcal K)$ be its algebra of bounded operators.
 A \emph{von Neumann algebra} on $\mathcal K$ is a subalgebra of $B(\mathcal K)$ which contains
 the identity operator, is closed under adjoints $*$, and is closed in the strong operator topology
 (that is, limits of functions converge if they converge pointwise).
\end{definition}

For $\mathcal S\subseteq B(\mathcal K)$, set
\[
 \mathcal S':=\{T\in B(\mathcal K):TS=ST\text{ for every }S\in\mathcal S\},
 \qquad \mathcal S'':=(\mathcal S')'.
\]
We call $S'$ the commutant of $S$ and $S''$ its bicommutant.
If $\mathcal S$ is closed under adjoints, von Neumann's bicommutant theorem (see, e.g. \cite[Section 2.1]{AP}) identifies
 $\mathcal S''$ with the strong topology closure of the $*$-algebra with identity operator generated by
 $\mathcal S$. We call this closure the von Neumann algebra generated by $\mathcal S$.

We are particularly interested in a natural von Neumann algebra associated with any Fuchsian
 group $\Gamma$. For such a $\Gamma$, consider the Hilbert space
\[
 \ell^2(\Gamma):=
 \left\{\mathbf a=(a_\gamma)_{\gamma\in\Gamma}:
 \sum_{\gamma\in\Gamma}|a_\gamma|^2<\infty\right\},
 \quad\text{with }\langle\mathbf a,\mathbf b\rangle_{\ell^2(\Gamma)}
 :=\sum_{\gamma\in\Gamma}a_\gamma\overline{b_\gamma}.
\]
For $\gamma\in\Gamma$, let $\delta_\gamma$ be the characteristic function of $\gamma$.
 Then $\{\delta_\gamma\}_{\gamma\in\Gamma}$ is the standard orthonormal basis of $\ell^2(\Gamma)$.
 Define the left and right regular representations of $\Gamma$ on $\ell^2(\Gamma)$ by
\[
 \lambda_\alpha\delta_\gamma:=\delta_{\alpha\gamma}
 \text{ and }\rho_\alpha\delta_\gamma:=\delta_{\gamma\alpha^{-1}}
 \text{ respectively.}
\]
We define the \emph{group von Neumann algebra} for $\Gamma$ as
\begin{equation}\label{eq:group-vN}
 L(\Gamma):=\{\lambda_\gamma:\gamma\in\Gamma\}''.
\end{equation}
The following facts can be found in, e.g., \cite[Subsection 1.3.1]{AP}.

\begin{proposition}\label{prop:group-standard}
Let $e$ be the identity of $\Gamma$.
\begin{enumerate}[label=(\roman*),leftmargin=*]
\item The algebra $L(\Gamma)$ carries the trace
\begin{equation}\label{eq:group-trace}
 \tau_\Gamma(X):=\langle X\delta_e,\delta_e\rangle
\end{equation}
which is faithful and satisfies $\tau_\Gamma(I)=1$. For a finite sum
 $X=\sum_{\gamma\in F}c_\gamma\lambda_\gamma$, we have $\tau_\Gamma(X)=c_e$.
\item We have
\begin{equation}\label{eq:right-commutant}
 L(\Gamma)'=\{\rho_\gamma:\gamma\in\Gamma\}''.
\end{equation}
Thus, the map
\begin{equation}\label{eq:J0}
 J_0\left(\sum_\gamma a_\gamma\delta_\gamma\right)
 :=\sum_\gamma\overline{a_\gamma}\,\delta_{\gamma^{-1}}
\end{equation}
satisfies $J_0L(\Gamma)J_0=L(\Gamma)'$ and therefore, it induces an anti-isomorphism
$
 L(\Gamma)'\longrightarrow L(\Gamma)$ given by $A\longmapsto J_0A^*J_0.$
\item Define $L^2(L(\Gamma),\tau_\Gamma)$ to be the completion of $L(\Gamma)$ with respect to
\[
 \langle X,Y\rangle_2:=\tau_\Gamma(Y^*X).
\]
 Then $
 \lambda_\gamma\mapsto\delta_\gamma$ extends to a unitary operator
\[
 L^2(L(\Gamma),\tau_\Gamma)\cong\ell^2(\Gamma).
\]
Under this identification, $
1$ corresponds to $\delta_e$ and
 $J_0
 X=
 {X^*}$.
\end{enumerate}
\end{proposition}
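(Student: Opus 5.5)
The statement to prove is Proposition~\ref{prop:group-standard}. It is a collection of standard facts, so the plan is to verify each item directly on the dense subspace of finitely supported vectors and finite sums $\sum c_\gamma\lambda_\gamma$, and then extend by strong-operator continuity. Throughout I use that $\lambda$ and $\rho$ are unitary representations and that they commute: $\lambda_\alpha\rho_\beta\delta_\gamma=\delta_{\alpha\gamma\beta^{-1}}=\rho_\beta\lambda_\alpha\delta_\gamma$.

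\textbf{Item (i).} For a finite sum $X=\sum_{\gamma\in F}c_\gamma\lambda_\gamma$ we have $X\delta_e=\sum c_\gamma\delta_\gamma$, so $\tau_\Gamma(X)=c_e$; in particular $\tau_\Gamma(I)=1$.

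For the trace property, note that $\tau_\Gamma(\lambda_\alpha\lambda_\beta)=1$ exactly when $\alpha\beta=e$, which holds iff $\beta\alpha=e$. Hence $\tau_\Gamma(XY)=\tau_\Gamma(YX)$ on the group algebra. Both sides are separately strongly continuous on bounded sets, so by Kaplansky density the identity extends to all of $L(\Gamma)$.

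For faithfulness, I show first that $\delta_e$ is separating for $L(\Gamma)$. Since $\rho_\gamma\in L(\Gamma)'$, for $X\in L(\Gamma)$ we have $X\delta_\gamma=X\rho_{\gamma^{-1}}\delta_e=\rho_{\gamma^{-1}}X\delta_e$. Thus $X\delta_e=0$ forces $X=0$. Now if $\tau_\Gamma(X^*X)=\|X\delta_e\|^2=0$, then $X=0$.

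\textbf{Item (ii).} This is the main obstacle. The inclusion $\{\rho_\gamma\}''\subseteq L(\Gamma)'$ is immediate from commutation.

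For the converse, take $T\in L(\Gamma)'$ and set $\eta=T\delta_e\in\ell^2(\Gamma)$. Since $T$ commutes with every $\lambda_\gamma$, we get $T\delta_\gamma=\lambda_\gamma\eta$. Hence $T$ acts as right convolution by $\eta$: on finitely supported $\mathbf a$, $T\mathbf a=\mathbf a*\eta$, which is a formal sum of the form $\sum\eta_\beta\rho_{\beta^{-1}}$.

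Symmetrically, any $S\in\{\rho_\gamma\}'$ is left convolution by $S\delta_e$. One checks on $\delta$'s that left and right convolutions commute, so $L(\Gamma)'\subseteq\{\rho_\gamma\}''$ provided we know $\{\rho_\gamma\}'=L(\Gamma)$.

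To break this circularity I would use $J_0$. It is an antiunitary involution, and a direct check gives $J_0\lambda_\alpha J_0\delta_\gamma=J_0\delta_{\alpha\gamma^{-1}}=\delta_{\gamma\alpha^{-1}}=\rho_\alpha\delta_\gamma$. Conjugation by $J_0$ preserves commutants and bicommutants, so $J_0L(\Gamma)J_0=\{\rho_\gamma\}''$ and $J_0L(\Gamma)'J_0=\{\rho_\gamma\}'$.

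It therefore suffices to prove $L(\Gamma)'\subseteq J_0L(\Gamma)J_0$. Write $R_\eta$ for the right-convolution operator above, so $T=R_\eta$. I would show $J_0TJ_0$ is left convolution by $J_0\eta$, and that such a bounded left convolution lies in $L(\Gamma)$. For the latter, $L(\Gamma)$ is the strong closure of the group algebra, and I would approximate by truncations combined with a Kaplansky/Fejér-type bounded approximation. This step requires care.

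A cleaner alternative is to cite the bicommutant theorem together with the standard fact (from \cite{AP}) that a bounded operator commuting with all $\rho_\gamma$ is left convolution by its value at $\delta_e$, and conversely. The anti-isomorphism $A\mapsto J_0A^*J_0$ then follows because $J_0$ is antilinear and involutive, so this map is multiplicative up to reversing order.

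\textbf{Item (iii).} By (i), $\langle\lambda_\alpha,\lambda_\beta\rangle_2=\tau_\Gamma(\lambda_{\beta^{-1}\alpha})=\delta_{\alpha,\beta}$. So $\lambda_\gamma\mapsto\delta_\gamma$ is isometric on the group algebra, with dense range (finitely supported vectors), and it extends to a unitary. Moreover $X\mapsto X\delta_e$ agrees with this map and is isometric on all of $L(\Gamma)$, since $\|X\delta_e\|^2=\tau_\Gamma(X^*X)$.

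Clearly $1\mapsto\delta_e$. Finally, $J_0(X\delta_e)=X^*\delta_e$ holds for $X=\lambda_\gamma$, because $J_0\delta_\gamma=\delta_{\gamma^{-1}}=\lambda_\gamma^*\delta_e$. By antilinearity it holds on finite sums, and it extends to the completion by isometry.
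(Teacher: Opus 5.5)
Your arguments for (i) and (iii) are fine. The paper itself gives no proof of this proposition; it only refers to Anantharaman--Popa. Your argument for (ii), however, has a gap, and you created it yourself.

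\textbf{The gap.} There is no circularity in the convolution argument. Since $\{\rho_\gamma\}''=(\{\rho_\gamma\}')'$, to get $L(\Gamma)'\subseteq\{\rho_\gamma\}''$ you only need every $T\in L(\Gamma)'$ to commute with every $S\in\{\rho_\gamma\}'$. The extra hypothesis $\{\rho_\gamma\}'=L(\Gamma)$ is never used.

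Having discarded this, you try instead to show that a bounded left convolution lies in $L(\Gamma)$ by truncation plus Kaplansky/Fej\'er approximation. That step does not work as proposed, for three reasons.
\begin{enumerate}
\item Kaplansky's density theorem presupposes that the operator already lies in the strong closure of the group algebra, which is exactly what is to be shown.
\item Truncations $\zeta\mathbf 1_F$ of a bounded convolver need not give uniformly bounded operators; already for $\Gamma=\mathbb Z$ this is the unboundedness of Fourier partial sums. So convergence on $c_{00}(\Gamma)$ does not yield strong convergence.
\item Bounded Fej\'er-type summation is an amenability tool. For non-amenable groups such as $\PSL$ it would need deep extra input that you have not supplied.
\end{enumerate}
Your fallback citation is also too weak as stated. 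Knowing that $\{\rho_\gamma\}'$ consists of bounded left convolutions, together with the bicommutant theorem, still leaves open whether these lie in $\{\lambda_\gamma\}''$.

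\textbf{The fix.} Carry out the check you already sketched, but for arbitrary $\ell^2$ symbols. For $T\in L(\Gamma)'$ and $S\in\{\rho_\gamma\}'$, put $\eta=T\delta_e$ and $\zeta=S\delta_e$. Boundedness plus Cauchy--Schwarz upgrade your formulas on $c_{00}(\Gamma)$ to
\[
(T\xi)(g)=\sum_h\xi(h)\eta(h^{-1}g),\qquad (S\xi)(g)=\sum_k\zeta(gk^{-1})\xi(k)
\]
for all $\xi\in\ell^2(\Gamma)$, with absolutely convergent sums. This is needed because $S\delta_\gamma$ and $T\delta_\gamma$ are not finitely supported. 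Hence
\[
(TS\delta_\gamma)(g)=\sum_h\zeta(h\gamma^{-1})\eta(h^{-1}g),\qquad (ST\delta_\gamma)(g)=\sum_k\zeta(gk^{-1})\eta(\gamma^{-1}k).
\]
These coincide under the bijection $k=\gamma h^{-1}g$, so $TS=ST$ and therefore $T\in\{\rho_\gamma\}''$. Combined with your correct identity $J_0L(\Gamma)J_0=\{\rho_\gamma\}''$, this gives $J_0L(\Gamma)J_0=L(\Gamma)'$. Your order-reversal remark then yields the anti-isomorphism $A\mapsto J_0A^*J_0$.
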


\subsection{The first two parts of the connection}
With the notation of Section~\ref{BergmanSpaces}, set
\[
 M_s:=\{\pi_s(\gamma):\gamma\in\Gamma\}''.
\]
Then, we have,
\begin{equation}\label{eq:bergman-commutant}
 M_s'=\{T\in B(A^2_{s-2}(\Hh)):
 T\,\pi_s(\gamma)=\pi_s(\gamma)\,T\text{ for every }\gamma\in\Gamma\}.
\end{equation}

For the cusp-form/random-matrix connection in this subsection, we now specialise to $\Gamma=\PSL$. 
 In this case, the projective action $\pi_s$ can be normalised to a standard action. If
 $g_2,g_3$ are generators of $\PSL$ of orders $2$ and $3$ respectively, then, by \eqref{eq:projective-law}, $\pi_s(g_2)^2=\alpha I$ and $\pi_s(g_3)^3=\beta I$ for some $\alpha,\beta \in \mathbb T$.
 Choose $\omega_2,\omega_3\in\mathbb T$ with $\omega_2^2=\alpha^{-1}$ and
 $\omega_3^3=\beta^{-1}$. Then the assignment
\[
 \widetilde\pi_s(g_2):=\omega_2 \pi_s(g_2),\qquad
 \widetilde\pi_s(g_3):=\omega_3 \pi_s(g_3)
\]
extends uniquely to a unitary representation $\widetilde\pi_s$ of $\Gamma$.
 Each $\widetilde\pi_s(\gamma)$ differs from $\pi_s(\gamma)$ by an element of $\mathbb T$, so the generated
 algebra $M_s$ and all absolute-value and orthogonality identities are unchanged.
 (See \cite[Notes~4.9(1) and Appendix~A]{Jones}.) In view of this, for simplicity, we will
 denote $\widetilde\pi_s$ by $\pi_s$ throughout this subsection. With this notational convention,
\begin{equation}
    \label{action}
 \pi_s(\alpha)\pi_s(\beta)=\pi_s(\alpha\beta).
\end{equation}
By \cite[Proposition~3.5 and Corollary~3.6]{Jones}, this action
$\pi_s$ extends to an action of $L(\Gamma)$ on $A^2_{s-2}(\Hh)$, such that
\[
 \lambda_\gamma\cdot h=\pi_s(\gamma)h.
\]

With these preliminaries, we can describe the first part of the connection between cusp forms and random matrices.
Let $k$ be a positive even integer and let $f,g\in S_k(\Gamma)$ be cusp forms of weight $k$.
 In \cite[Proposition~7.1 and Remark~9.6]{Jones} it is proven that multiplication by $f$
 defines a bounded map $\mathcal M_f\colon A^2_{s-2}(\Hh)\longrightarrow A^2_{k+s-2}(\Hh)$ given by
\[
 \mathcal M_fh:=f\cdot h,
\]
and that the Toeplitz operators
\begin{equation}\label{eq:cusp-Toeplitz}
 T_{f,g}^{(s)}:=\mathcal M_f^*\mathcal M_g
\end{equation}
are elements of $M_s'$. In \cite[Notes~3.3(2)]{Jones} it is shown that $M_s'$ carries a trace $\tau_s'$. We normalise
 it by $\tau_s'(I)=1$. By \cite[Theorem~9.3 and Remark~9.6]{Jones},
\[
 \tau_s'\bigl(T_{f,g}^{(s)}\bigr)
 \doteq
 \langle g,f\rangle_{\mathrm{Pet}},
\]
where the proportionality constant depends only on the normalization
of the trace and
$\langle f,g\rangle_{\text{Pet}}$ is the Petersson inner product
\[
 \langle g,f\rangle_{\text{Pet}}:=\int_{\Gamma\backslash\Hh}g(z)\overline{f(z)}y^k\frac{dx\,dy}{y^2}.
\]
For the modular group, we have, by \cite{Radulescu} and
 \cite[Theorems~9.2--9.3 and Remark~9.6]{Jones}, that the Toeplitz operators
 \eqref{eq:cusp-Toeplitz} are dense in terms of
\begin{equation}
    \label{2norm} \|T\|_{2,\tau_s'}:=\bigl(\tau_s'(T^*T)\bigr)^{1/2}
\end{equation}
as $k,f,$ and $g$ vary. 

With this notation, we can explain the term ``tracelike'' used in Definition~\ref{def:tracelike}.
 If $\tau_s'$ denotes the normalised trace on $M_s'$, then, by \cite[Corollary~9.5]{Jones}, a
 vector $\xi\in A^2_{s-2}(\Hh)$ is tracelike precisely when
\[
 \langle T\xi,\xi\rangle_s=\tau_s'(T)\quad\text{for all }T\in M_s'.
\]
That is, $\xi$ is tracelike exactly when $\xi$ is a trace vector for the commutant $M_s'$.

We now specialise to the critical case $s=s_0=13$ (recall that we have already
 specialised to $\Gamma=\PSL$). Let $\xi$ be a tracelike vector in $A^2_{11}(\Hh)$. Since, by
 Proposition~\ref{prop:tracelike-frame}, its $\Gamma$-orbit is an orthonormal basis, the formula
\begin{equation}\label{eq:orbit-unitary}
 U_\xi\left(\sum_\gamma a_\gamma\delta_\gamma\right)
 :=\sum_\gamma a_\gamma\pi_{13}(\gamma)\xi
\end{equation}
defines a unitary operator $U_\xi\colon\ell^2(\Gamma)\longrightarrow A^2_{11}(\Hh)$ satisfying
 $U_\xi\lambda_\alpha=\pi_{13}(\alpha)U_\xi$. Consequently,
\begin{equation}\label{eq:unitary-algebras}
 U_\xi L(\Gamma)U_\xi^*=M_{13},\qquad \text{ and }\qquad
 U_\xi L(\Gamma)'U_\xi^*=M_{13}'.
\end{equation}
Thus, the unitary $U_\xi$ induces a $*$-isomorphism
 $M_{13}'\longrightarrow L(\Gamma)'$ given by $T\longmapsto U_\xi^*TU_\xi.$
Composing this with the anti-isomorphism
$L(\Gamma)'\longrightarrow L(\Gamma)$ of Proposition \ref{prop:group-standard}(ii)
we obtain the linear, trace-preserving $*$-anti-isomorphism
\begin{equation}\label{eq:cusp-group-map}
 \mathfrak C_\xi:M_{13}'\longrightarrow L(\Gamma),
 \qquad \text{given by
 $\mathfrak C_\xi(T)
 :=J_0(U_\xi^*TU_\xi)^*J_0.$}
\end{equation}
Thus, $\mathfrak C_\xi(ST)=\mathfrak C_\xi(T)\mathfrak C_\xi(S),$ $\mathfrak C_\xi(T^*)=\mathfrak C_\xi(T)^*,$
and $
 \tau_\Gamma(\mathfrak C_\xi(T))=\tau_{13}'(T).$
In particular, $\mathfrak C_\xi$ is an isometry. It is the identification of the commutant of the modular group action with the group von Neumann algebra of $\PSL$ which, as mentioned in the Introduction, comprises the second part of the connection between cusp froms and random matrices. Since, by the first part of this connection, the Toeplitz operators $T_{f, g}^{(13)}$ are dense in the commutant in terms of the norm \eqref{2norm}, to determine explicitly the isometry $\mathfrak C_\xi$ it suffices to compute  $\mathfrak C_\xi(T_{f,g}^{(13)})$. We show that its group coefficients are explicit integrals: For $\gamma\in\Gamma$, we have
\begin{equation}\label{eq:cusp-group-coefficients}
 \begin{aligned}
 \bigl\langle\mathfrak C_\xi(T_{f,g}^{(13)})\delta_e,\delta_\gamma\bigr\rangle
 &=\bigl\langle T_{f,g}^{(13)}\pi_{13}(\gamma^{-1})\xi,\xi\bigr\rangle_{13}\\
 &=\int_{\Hh} g(z)\pi_{13}(\gamma^{-1})\xi(z)\,
 \overline{f(z)\xi(z)}\,y^{k+11}\,dx\,dy.
 \end{aligned}
\end{equation}
Therefore, an explicit tracelike $\xi$ converts every specified cusp-form Toeplitz operator into an explicit convolution operator in $L(\Gamma)$.

The third piece of the connection between cusp forms and random matrices is based on
 Voiculescu's asymptotic-freeness theorem and is explicit \cite{Voiculescu}. This is the reason for which Jones asked for an explicit tracelike $\xi$. Since, as seen in \eqref{eq:cusp-group-coefficients}, such a $\xi$ makes \eqref{eq:cusp-group-map}, the full connection can then be considered as explicit.

The subsequent sections return to an arbitrary finite-covolume Fuchsian group $\Gamma$ and describe explicit tracelike vectors throughout the full range
 $1<s\leq s_0$. Starting with the orbit of the Bergman reproducing kernel, we form
 the operator
\[
 \mathcal C\colon\ell^2(\Gamma)\longrightarrow A^2_{s-2}(\Hh),
\]
replace $\mathcal C$ by the partial isometry $V$ in its polar decomposition, and set
 $\xi_s:=V\delta_e$. The intertwining property of $V$ converts Parseval's identity in
 $\ell^2(\Gamma)$ into the required orbit identity. At $s=s_0$, $V$ is unitary and for $\Gamma=\PSL$ it is the operator $U_\xi$ in
 \eqref{eq:orbit-unitary}. Thus, in that case the construction also makes the
 cusp-form/group-von-Neumann-algebra correspondence explicit.

\section{The kernel orbit and the operator \texorpdfstring{$\mathcal C$}{C}}\label{C}

For the remainder of the paper, let $\Gamma<\mathrm{PSL}_2(\R)$ be an arbitrary Fuchsian group of finite hyperbolic covolume, and fix the parameter $s$ with $1<s\leq s_0$. We use the projective action $\pi_s$ of Subsection~\ref{BergmanSpaces} rather than renormalising it to give an actual action. Hence let $\sigma_s$ be the multiplier and
$
 \sigma_s(e,\gamma)=\sigma_s(\gamma,e)=1.
$
On $\ell^2(\Gamma)$, use the twisted left regular operators
\[
 \lambda_\alpha^{\sigma_s}\delta_\beta
 :=\sigma_s(\alpha,\beta)\delta_{\alpha\beta}.
\]
They are unitary and satisfy
\[
 \lambda_\alpha^{\sigma_s}\lambda_\beta^{\sigma_s}
 =\sigma_s(\alpha,\beta)\lambda_{\alpha\beta}^{\sigma_s},
 \qquad
 L_{\sigma_s}(\Gamma):=\{\lambda_\gamma^{\sigma_s}:\gamma\in\Gamma\}''.
\]
This is the twisted group von Neumann algebra of \cite[Definition~2.1]{Jones}, with the multiplier convention fixed by \eqref{eq:projective-law}. When $\sigma_s=1$, these are the ordinary operators $\lambda_\alpha$ and the algebra $L(\Gamma)$ used above. The twisting cannot in general be removed for an arbitrary Fuchsian group \cite[Notes~4.9(2) and Appendix~A]{Jones}. The parameter $s$
 will be displayed when it is useful and suppressed from some notation once it is fixed.

\subsection{The reproducing kernel}
For $w\in\Hh$, let $K_{s,w}$ denote the reproducing-kernel vector introduced in Subsection~\ref{BergmanSpaces}.

Fix a $z_0 \in \Hh$ with a trivial stabiliser in $\Gamma$ (for $\Gamma=\PSL$, one may take $z_0=2i$) and set
\begin{equation}\label{eq:kappa-definition}
 \kappa_s(z):=K_{s,z_0}(z).
\end{equation}
For $\gamma\in\Gamma$, put
\begin{equation}\label{eq:kappa-orbit}
 \kappa_{s,\gamma}:=\pi_s(\gamma)\kappa_s.
\end{equation}
When $s$ is fixed, we abbreviate these to $\kappa$ and $\kappa_\gamma$.

\begin{proposition}\label{prop:kappa-evaluation}
For $h\in A^2_{s-2}(\Hh)$ and $\gamma\in\Gamma$,
\begin{equation}\label{eq:kappa-evaluation}
 \bigl|\langle h,\kappa_\gamma\rangle_s\bigr|^2
 =\operatorname{Im}(z_0)^{-s}\,|h(\gamma z_0)|^2\operatorname{Im}(\gamma z_0)^s.
\end{equation}
\end{proposition}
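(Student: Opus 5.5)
The plan is to move $\pi_s(\gamma)$ across the inner product using unitarity, then apply the reproducing property \eqref{eq:reproducing}. Since $\pi_s(\gamma)$ is unitary, with adjoint $\pi_s(\gamma)^{-1}$, we have
\[
 \langle h,\kappa_\gamma\rangle_s=\langle h,\pi_s(\gamma)\kappa\rangle_s=\langle \pi_s(\gamma)^{-1}h,\kappa\rangle_s .
\]
By \eqref{eq:projective-law} with $\beta=\alpha^{-1}$ and the normalisation $\sigma_s(e,\cdot)=1$, the operator $\pi_s(\gamma)^{-1}$ equals $\pi_s(\gamma^{-1})$ up to a unimodular constant. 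Since we only want the modulus, these phases are harmless. Hence, by \eqref{eq:reproducing} applied at $w=z_0$,
\[
 |\langle h,\kappa_\gamma\rangle_s|=|(\pi_s(\gamma^{-1})h)(z_0)|=|(h|_s\gamma)(z_0)|=|c_\gamma z_0+d_\gamma|^{-s}|h(\gamma z_0)|.
\]
Here we use $\pi_s(\gamma^{-1})h=h|_s\gamma$, which holds up to a phase coming from the choice of representative of $\gamma^{-1}$ versus $\gamma$ (the representatives differ by a sign). Even with principal-branch powers, a sign change only alters $(cz+d)^{-s}$ by a unimodular factor. We also use $|w^{-s}|=|w|^{-s}$ for real $s$ and the principal branch.

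Next we invoke the classical identity $\operatorname{Im}(\gamma z_0)=\operatorname{Im}(z_0)/|c_\gamma z_0+d_\gamma|^2$. This gives
\[
 |c_\gamma z_0+d_\gamma|^{-2s}=\operatorname{Im}(\gamma z_0)^s\operatorname{Im}(z_0)^{-s}.
\]
Squaring the previous display then yields \eqref{eq:kappa-evaluation}.

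The only step needing care is the bookkeeping of the projective phases. Specifically, one must check that $\pi_s(\gamma)^*$ agrees with the slash by $\gamma$ up to an element of $\mathbb T$. I would verify this directly: by unitarity, $\pi_s(\gamma)^*=\pi_s(\gamma)^{-1}$, and $\pi_s(\gamma)^{-1}h$ is determined by $\pi_s(\gamma)(h|_s\gamma)=h$ up to a cocycle of modulus one. That cocycle arises from $(c_{\gamma^{-1}}\gamma z+d_{\gamma^{-1}})^{-s}(c_\gamma z+d_\gamma)^{-s}$, which differs from $1$ only by a branch phase, since $(c_{\gamma^{-1}}\gamma z+d_{\gamma^{-1}})(c_\gamma z+d_\gamma)=\pm1$. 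Everything else is routine.
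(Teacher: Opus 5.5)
Your proof is correct and follows the same route as the paper: you move $\pi_s(\gamma)$ across the inner product by unitarity, apply the reproducing property at $z_0$ to obtain $|(h|_s\gamma)(z_0)|$, and then convert $|c_\gamma z_0+d_\gamma|^{-2s}$ using $\operatorname{Im}(\gamma z_0)=\operatorname{Im}(z_0)/|c_\gamma z_0+d_\gamma|^2$. Your explicit check that the projective-law and branch phases have modulus one spells out what the paper compresses into ``by unitarity.''
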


\begin{proof}
By unitarity, \eqref{eq:reproducing}, and \eqref{eq:kernel-diagonal},
\[
 \bigl|\langle h,\pi_s(\gamma)\kappa_s\rangle_s\bigr|
 =|(h|_s\gamma)(z_0)|.
\]
Using $\operatorname{Im}(\gamma z_0)
=\operatorname{Im}(z_0)/|c_\gamma z_0+d_\gamma|^2$,
we deduce \eqref{eq:kappa-evaluation}.
\end{proof}

\subsection{The operator \texorpdfstring{$\mathcal C$}{C}}
Let $c_{00}(\Gamma)$ be the finitely supported vectors in $\ell^2(\Gamma)$. Define
$
 \mathcal C_0:c_{00}(\Gamma)\longrightarrow A^2_{s-2}(\Hh)$ by
 \begin{equation}\label{eq:C0}
 \mathcal C_0\mathbf a:=\sum_{\gamma\in\Gamma}a_\gamma\kappa_\gamma,
 \quad \text{for $\mathbf a=(a_\gamma)_{\gamma\in\Gamma}.$}
\end{equation}

\begin{proposition}\label{prop:C-bounded}
The operator $\mathcal C_0$ extends uniquely to a bounded operator
\begin{equation}\label{eq:C-extension}
 \mathcal C:\ell^2(\Gamma)\longrightarrow A^2_{s-2}(\Hh).
\end{equation}
Moreover,
\begin{equation}\label{eq:C-dense-range}
 \overline{\ran\mathcal C}=A^2_{s-2}(\Hh).
\end{equation}
Its adjoint is the operator
\begin{equation}\label{eq:C-adjoint}
 (\mathcal C^*h)(\gamma)=\langle h,\kappa_\gamma\rangle_s
 \qquad(h\in A^2_{s-2}(\Hh),\ \gamma\in\Gamma).
\end{equation}
\end{proposition}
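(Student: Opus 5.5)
Our plan is to establish boundedness by working with the candidate adjoint first. Define $\mathcal D\colon A^2_{s-2}(\Hh)\to\ell^2(\Gamma)$ by $(\mathcal Dh)(\gamma):=\langle h,\kappa_\gamma\rangle_s$. By Proposition~\ref{prop:kappa-evaluation},
\[
 \|\mathcal Dh\|_{\ell^2(\Gamma)}^2=\operatorname{Im}(z_0)^{-s}\sum_{\gamma\in\Gamma}|h(\gamma z_0)|^2\operatorname{Im}(\gamma z_0)^s .
\]
So everything reduces to a sampling (Carleson-type) inequality
\begin{equation*}
 \sum_{\gamma\in\Gamma}|h(\gamma z_0)|^2\operatorname{Im}(\gamma z_0)^s\le A\,\|h\|_s^2 \qquad(h\in A^2_{s-2}(\Hh)).
\end{equation*}
To prove it, choose $r>0$ so small that the hyperbolic discs $D_\gamma:=D_{\mathrm{hyp}}(\gamma z_0,r)$ are pairwise disjoint. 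This is possible because the stabiliser of $z_0$ is trivial and $\Gamma$ acts properly discontinuously, so $\inf_{\gamma\neq e}d(z_0,\gamma z_0)>0$; by isometry, the same separation holds between any two orbit points. Apply the sub-mean-value property to the holomorphic function $|h|^2$ on $D_\gamma$. On this disc $y\asymp\operatorname{Im}(\gamma z_0)$ with constants depending only on $r$, and the Euclidean area of $D_\gamma$ is $\asymp\operatorname{Im}(\gamma z_0)^2$. Hence
\[
 |h(\gamma z_0)|^2\operatorname{Im}(\gamma z_0)^s\le A_r\int_{D_\gamma}|h|^2y^{s-2}\,dx\,dy .
\]
Summing over $\gamma$ and using disjointness gives the inequality. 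An alternative route is to invoke M\"obius invariance: write $D_\gamma=\gamma D_e$ and note that $|h|_s\gamma|^2y^s$ transforms well, which reduces the estimate to a single disc around $z_0$. This step, the uniform local estimate together with disjointness, is the main analytic input and the only place where care is needed.

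Next we identify $\mathcal D$ with the adjoint. For $\mathbf a\in c_{00}(\Gamma)$ we have $\langle\mathcal C_0\mathbf a,h\rangle_s=\sum_\gamma a_\gamma\langle\kappa_\gamma,h\rangle_s=\langle\mathbf a,\mathcal Dh\rangle_{\ell^2}$. Therefore
\[
 \|\mathcal C_0\mathbf a\|_s=\sup_{\|h\|_s\le1}|\langle\mathbf a,\mathcal Dh\rangle|\le\|\mathcal D\|\,\|\mathbf a\| .
\]
It follows that $\mathcal C_0$ is bounded on the dense subspace $c_{00}(\Gamma)$ and extends uniquely by continuity to $\mathcal C$. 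The identity $\langle\mathcal C\mathbf a,h\rangle=\langle\mathbf a,\mathcal Dh\rangle$ then extends to all of $\ell^2(\Gamma)$ by continuity, which gives $\mathcal C^*=\mathcal D$, i.e.\ \eqref{eq:C-adjoint}.

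Finally, for the dense range, note that $\ran\mathcal C$ contains every $\kappa_\gamma=\pi_s(\gamma)K_{s,z_0}$. A direct computation from \eqref{eq:kernel} shows that $\pi_s(\gamma)K_{s,z_0}$ is a unimodular multiple of $K_{s,\gamma z_0}$; equivalently, $\langle h,\kappa_\gamma\rangle_s=0$ for all $\gamma$ exactly when $h(\gamma z_0)=0$ for all $\gamma$, by the proof of Proposition~\ref{prop:kappa-evaluation}. Thus $(\ran\mathcal C)^\perp=\ker\mathcal C^*$ consists of the $h$ vanishing on $\Gamma z_0$. Since $s\le s_0$, this kernel is $\{0\}$ by Theorem~\ref{thm:Jones-zero}, or equivalently by Proposition~\ref{prop:kernel-complete}. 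This yields \eqref{eq:C-dense-range}.
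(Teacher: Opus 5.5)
Your proof is correct and follows essentially the same route as the paper: a Bessel (sampling) inequality from the local sub-mean-value estimate on pairwise disjoint discs around the orbit points, boundedness of $\mathcal C_0$ by duality, the adjoint formula by continuity, and dense range from Theorem~\ref{thm:Jones-zero} via Proposition~\ref{prop:kappa-evaluation}. Two small slips do not affect the argument, since only non-vanishing of the multiple is used: $\kappa_\gamma$ is a nonzero multiple of $K_{s,\gamma z_0}$ of modulus $(\operatorname{Im}(\gamma z_0)/\operatorname{Im}(z_0))^{s/2}$ rather than a unimodular one, and $|h|^2$ is subharmonic rather than holomorphic.
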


\begin{proof}
For every fixed hyperbolic radius $r>0$, the local estimate
\begin{equation}\label{eq:local-estimate}
 |h(w)|^2\operatorname{Im}(w)^s
 \ll_{r,s}\int_{D(w,r)}|h(\zeta)|^2\operatorname{Im}(\zeta)^{s-2}\,dA(\zeta)
\end{equation}
holds for holomorphic $h$, where $dA$ denotes Euclidean area measure (see \cite[(5.2)]{ImamogluOSullivan}).
 Since the stabiliser of $z_0$ is trivial and the action is properly discontinuous, one may
 choose $r>0$ so that the discs $D(\gamma z_0,r)$ are pairwise disjoint. Combining
 \eqref{eq:kappa-evaluation} and \eqref{eq:local-estimate}, and then summing over $\gamma$, gives the
 estimate
\begin{equation}\label{eq:Bessel}
 \sum_{\gamma\in\Gamma}\bigl|\langle h,\kappa_\gamma\rangle_s\bigr|^2
 \ll_{s,\Gamma,z_0}\|h\|_s^2.
\end{equation}
Consequently, for $\mathbf a\in c_{00}(\Gamma)$ and $h\in A^2_{s-2}(\Hh)$,
\[
 \bigl|\langle\mathcal C_0\mathbf a,h\rangle_s\bigr|
 \leq\|\mathbf a\|_{\ell^2(\Gamma)}
 \left(\sum_{\gamma\in\Gamma}\bigl|\langle\kappa_\gamma,h\rangle_s\bigr|^2\right)^{1/2}
 \ll_{s,\Gamma,z_0}\|\mathbf a\|_{\ell^2(\Gamma)}\|h\|_s.
\]
Taking $h=\mathcal C_0\mathbf a$ implies that $\mathcal C_0$ is bounded on $c_{00}(\Gamma)$ and therefore extends uniquely to
the bounded operator \eqref{eq:C-extension}.

If $h$ is orthogonal to every $\kappa_\gamma$, then \eqref{eq:kappa-evaluation} gives
 $h(\gamma z_0)=0$ for every $\gamma$. Theorem ~\ref{thm:Jones-zero} therefore gives $h=0$. Hence the closed span of the $\kappa_\gamma$ is all of
 $A^2_{s-2}(\Hh)$. Since their finite linear combinations lie in $\ran\mathcal C$, this proves
 \eqref{eq:C-dense-range}. Finally, \eqref{eq:C-adjoint} follows from the definition of the adjoint
 and $\mathcal C\delta_\gamma=\kappa_\gamma$.
\end{proof}

\section{Polar decomposition}\label{partisom}

The operator $\mathcal C$ is compatible with the group action and has dense range, but it need not be an isometry.
 Its ``polar decomposition'' supplies the required normalisation.

We first establish the existence of a square root operator.
\begin{proposition}\label{prop:C-equivariant}
For every $\alpha\in\Gamma$,
\begin{equation}\label{eq:C-equivariant}
 \mathcal C\lambda_\alpha^{\sigma_s}=\pi_s(\alpha)\mathcal C.
\end{equation}
Consequently, $\mathcal C^*\mathcal C$ commutes with every $\lambda_\alpha^{\sigma_s}$. Its positive square root
\begin{equation}\label{eq:abs-C}
 |\mathcal C|:=(\mathcal C^*\mathcal C)^{1/2}
\end{equation}
also commutes with every $\lambda_\alpha^{\sigma_s}$.
\end{proposition}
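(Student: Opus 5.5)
The plan is to verify the intertwining relation \eqref{eq:C-equivariant} on the basis vectors $\delta_\beta$ and then extend it by linearity and continuity. Once the relation is in hand, the commutation statements follow from general operator theory.

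For $\alpha,\beta\in\Gamma$, the definitions \eqref{eq:kappa-orbit} and \eqref{eq:C0} give
\[
 \mathcal C\lambda_\alpha^{\sigma_s}\delta_\beta=\sigma_s(\alpha,\beta)\,\mathcal C\delta_{\alpha\beta}=\sigma_s(\alpha,\beta)\,\pi_s(\alpha\beta)\kappa_s.
\]
On the other side, the projective law \eqref{eq:projective-law} gives
\[
 \pi_s(\alpha)\mathcal C\delta_\beta=\pi_s(\alpha)\pi_s(\beta)\kappa_s=\sigma_s(\alpha,\beta)\,\pi_s(\alpha\beta)\kappa_s.
\]
The two sides therefore agree on every $\delta_\beta$, and hence on $c_{00}(\Gamma)$. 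The multiplier on $\ell^2(\Gamma)$ was chosen to be the same cocycle $\sigma_s$ as in \eqref{eq:projective-law}, so there are no conventions to reconcile, and this is the only point that needs attention. Both sides are bounded operators: $\mathcal C$ is bounded by Proposition~\ref{prop:C-bounded}, and $\lambda_\alpha^{\sigma_s}$ and $\pi_s(\alpha)$ are unitary. Since $c_{00}(\Gamma)$ is dense, the identity \eqref{eq:C-equivariant} holds on all of $\ell^2(\Gamma)$.

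To show that $\mathcal C^*\mathcal C$ commutes with each $\lambda_\alpha^{\sigma_s}$, take adjoints in \eqref{eq:C-equivariant}, which gives $(\lambda_\alpha^{\sigma_s})^*\mathcal C^*=\mathcal C^*\pi_s(\alpha)^*$. Unitarity of both operators turns this into $\mathcal C^*\pi_s(\alpha)=\lambda_\alpha^{\sigma_s}\mathcal C^*$. Combining the two relations,
\[
 \mathcal C^*\mathcal C\lambda_\alpha^{\sigma_s}=\mathcal C^*\pi_s(\alpha)\mathcal C=\lambda_\alpha^{\sigma_s}\mathcal C^*\mathcal C.
\]

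For the square root, apply the continuous functional calculus. Since $\mathcal C^*\mathcal C$ is positive and bounded, $|\mathcal C|$ is a norm limit of polynomials in $\mathcal C^*\mathcal C$, for example via Weierstrass approximation of $\sqrt t$ on $[0,\|\mathcal C\|^2]$. Each such polynomial commutes with $\lambda_\alpha^{\sigma_s}$, and commutation is preserved under norm limits, so $|\mathcal C|$ commutes with $\lambda_\alpha^{\sigma_s}$ as well. Equivalently, $|\mathcal C|$ lies in the von Neumann algebra generated by $\mathcal C^*\mathcal C$, which is contained in $\{\lambda_\alpha^{\sigma_s}\}'$.
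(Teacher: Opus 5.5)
Your proof is correct and follows the paper's argument step for step. You verify the intertwining on each $\delta_\beta$ via the projective law, extend by boundedness and density, take adjoints to get $\lambda_\alpha^{\sigma_s}\mathcal C^*=\mathcal C^*\pi_s(\alpha)$, and conclude for $\mathcal C^*\mathcal C$. The only difference is at the last step: for $|\mathcal C|$ you prove the commutation directly by polynomial (Weierstrass) approximation in the functional calculus, whereas the paper cites the square-root theorem of Reed--Simon (Theorem VI.9).
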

\begin{proof}
For $\alpha,\beta\in\Gamma$, the projective representation law \eqref{eq:projective-law} implies
\[
 \mathcal C\lambda_\alpha^{\sigma_s}\delta_\beta
 =\sigma_s(\alpha,\beta)\mathcal C\delta_{\alpha\beta}
 =\sigma_s(\alpha,\beta)\kappa_{\alpha\beta}
 =\pi_s(\alpha)\kappa_\beta
 =\pi_s(\alpha)\mathcal C\delta_\beta.
\]
Linearity, density, and continuity yield \eqref{eq:C-equivariant}. Taking adjoints gives $(\lambda_\alpha^{\sigma_s})^*\mathcal C^*=\mathcal C^*\pi_s(\alpha)^*$. Multiplying on the left by $\lambda_\alpha^{\sigma_s}$ and on the right by $\pi_s(\alpha)$, and using unitarity, we deduce $\lambda_\alpha^{\sigma_s}\mathcal C^*=\mathcal C^*\pi_s(\alpha)$ and hence
$
 \mathcal C^*\mathcal C\lambda_\alpha^{\sigma_s}
 =\mathcal C^*\pi_s(\alpha)\mathcal C
 =\lambda_\alpha^{\sigma_s}\mathcal C^*\mathcal C.
$ Since $\langle \mathcal C^* \mathcal C x, x \rangle =
\langle \mathcal C x, \mathcal C x \rangle_s \ge 0$, \cite[Theorem~VI.9]{ReedSimon} the self-adjoint operator $\mathcal C^* \mathcal C$ has a unique square root $(\mathcal C^*\mathcal C)^{1/2}$ which is 
positive, self-adjoint and commutes with $\lambda_{\gamma}^{\sigma_s}$.
\end{proof}
With this, we can construct the partial isometry associated with $\mathcal C$ and establish its key properties for our purposes.
\begin{proposition}\label{prop:polar}
There is a partial isometry
\[
 V:\ell^2(\Gamma)\longrightarrow A^2_{s-2}(\Hh)
\]
such that
\begin{equation}\label{eq:polar}
 \mathcal C=V|\mathcal C|,\qquad \text{and \, \, $\ker V=\ker\mathcal C.$}
\end{equation}
It satisfies
\begin{equation}\label{eq:polar-projections}
 V^*V=P_{(\ker\mathcal C)^\perp},\qquad \text{and \, \, $VV^*=I_{A^2_{s-2}(\Hh)},$}
\end{equation}
and, for every $\alpha\in\Gamma$,
\begin{equation}\label{eq:V-equivariant}
 V\lambda_\alpha^{\sigma_s}=\pi_s(\alpha)V.
\end{equation}
\end{proposition}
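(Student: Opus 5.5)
The plan is to invoke the standard polar decomposition theorem for bounded operators between Hilbert spaces (e.g.\ \cite[Theorem~VI.10]{ReedSimon}), applied to the bounded operator $\mathcal C$ of Proposition~\ref{prop:C-bounded}. First define $V_0$ on $\ran|\mathcal C|$ by $V_0(|\mathcal C|x):=\mathcal Cx$. This is well defined and isometric because
\[
\||\mathcal C|x\|^2=\langle\mathcal C^*\mathcal Cx,x\rangle=\|\mathcal Cx\|_s^2 .
\]
So $V_0$ extends to an isometry on $\overline{\ran|\mathcal C|}$. Then set $V:=0$ on $(\ran|\mathcal C|)^\perp=\ker|\mathcal C|$. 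The isometric identity also gives $\ker|\mathcal C|=\ker\mathcal C$, so $\mathcal C=V|\mathcal C|$ and $\ker V=\ker\mathcal C$. Moreover, since $|\mathcal C|$ is self-adjoint, $\overline{\ran|\mathcal C|}=(\ker\mathcal C)^\perp$.

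For the projection identities, $V$ is isometric on $(\ker\mathcal C)^\perp$ and zero on its complement, so $V^*V=P_{(\ker\mathcal C)^\perp}$. Since $V$ is a partial isometry, $VV^*$ is the orthogonal projection onto $\ran V$. This range is closed, being the isometric image of a closed subspace, and it contains $\ran\mathcal C$. By \eqref{eq:C-dense-range}, $\ran V$ is therefore all of $A^2_{s-2}(\Hh)$, so $VV^*=I$.

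The equivariance \eqref{eq:V-equivariant} is the only point that uses the group structure. I would argue directly on dense subspaces using Proposition~\ref{prop:C-equivariant}. For $y=|\mathcal C|x$, since $|\mathcal C|$ commutes with $\lambda_\alpha^{\sigma_s}$,
\[
V\lambda_\alpha^{\sigma_s}|\mathcal C|x=V|\mathcal C|\lambda_\alpha^{\sigma_s}x=\mathcal C\lambda_\alpha^{\sigma_s}x=\pi_s(\alpha)\mathcal Cx=\pi_s(\alpha)V|\mathcal C|x .
\]
By continuity the identity holds on $\overline{\ran|\mathcal C|}$. On $\ker\mathcal C$, the equivariance \eqref{eq:C-equivariant} shows that $\ker\mathcal C$ is invariant under the unitary $\lambda_\alpha^{\sigma_s}$. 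Hence both sides vanish there, since $V\lambda_\alpha^{\sigma_s}y=0$ and $\pi_s(\alpha)Vy=0$. As $\ell^2(\Gamma)=\overline{\ran|\mathcal C|}\oplus\ker\mathcal C$, this proves \eqref{eq:V-equivariant}.

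The main subtlety is this last step. One needs invariance of both summands of the decomposition under $\lambda_\alpha^{\sigma_s}$, not just commutation on the range. Invariance of the orthogonal complement follows from unitarity, because $(\lambda_\alpha^{\sigma_s})^*=\lambda_{\alpha}^{\sigma_s}{}^{-1}$ also preserves $\ker\mathcal C$: it is a scalar multiple of $\lambda^{\sigma_s}_{\alpha^{-1}}$.
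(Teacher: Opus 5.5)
Your proposal is correct and follows the paper's proof essentially step by step: you define $V$ on $\ran|\mathcal C|$ and extend it by zero on $\ker\mathcal C$, use the dense range of $\mathcal C$ to get $VV^*=I$, and check equivariance on $\ran|\mathcal C|$ and on the $\lambda_\alpha^{\sigma_s}$-invariant subspace $\ker\mathcal C$. There are two small differences. First, you verify directly that $\ran V$ is closed instead of citing Reed--Simon. Second, your closing remark about invariance of $(\ker\mathcal C)^\perp$ is true but unnecessary, since continuity of the bounded operators $V\lambda_\alpha^{\sigma_s}$ and $\pi_s(\alpha)V$ already extends the identity from $\ran|\mathcal C|$ to $\overline{\ran|\mathcal C|}$.
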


\begin{proof}
Initially define $V$ on $\ran|\mathcal C|$ by
\begin{equation}\label{eq:V-initial}
 V(|\mathcal C|\mathbf a):=\mathcal C\mathbf a.
\end{equation}
This is well defined and isometric because
\[
 \bigl\||\mathcal C|\mathbf a\bigr\|_{\ell^2(\Gamma)}^2
 =\langle\mathcal C^*\mathcal C\mathbf a,\mathbf a\rangle
 =\|\mathcal C\mathbf a\|_s^2.
\]
The same identity show that,
$
 \ker|\mathcal C|=\ker\mathcal C$, which, in turn implies
\[ \overline{\ran|\mathcal C|}=(\ker |\mathcal C|)^\perp=(\ker\mathcal C)^\perp.
\]
Thus \eqref{eq:V-initial} extends to an isometry on $(\ker\mathcal C)^\perp$. Define it to be zero
 on $\ker\mathcal C$. By construction \eqref{eq:polar} holds. The final subspace $\overline{\ran(\mathcal C)}$ of $V$ equals $A_{s-2}^2$, by \eqref{eq:C-dense-range}. Since, by definition, the initial subspace of $V$ is $(\ker\mathcal C)^\perp$, it follows from the proposition preceding \cite[VI.10]{ReedSimon} that
\begin{equation}\label{VV*}
 V^*V=P_{(\ker\mathcal C)^\perp},\qquad \text{and \, \, $VV^*=\operatorname{proj}_{\overline{\ran(\mathcal C)}}
 =I_{A^2_{s-2}(\Hh)}$}.
\end{equation}

Finally, the kernel of $\mathcal C$ is invariant under every $\lambda_\alpha^{\sigma_s}$ by \eqref{eq:C-equivariant}.
 On $\ran|\mathcal C|$, using Proposition~\ref{prop:C-equivariant},
\[
 \begin{aligned}
 V\lambda_\alpha^{\sigma_s}|\mathcal C|\mathbf a
 =V|\mathcal C|\lambda_\alpha^{\sigma_s}\mathbf a=\mathcal C\lambda_\alpha^{\sigma_s}\mathbf a=\pi_s(\alpha)\mathcal C\mathbf a=\pi_s(\alpha)V|\mathcal C|\mathbf a.
 \end{aligned}
\]
Continuity gives the identity on $(\ker\mathcal C)^\perp$, and both sides vanish on
 $\ker\mathcal C$. This proves \eqref{eq:V-equivariant}.
\end{proof}

\section{The explicit tracelike vector}\label{tracelikev}

We now apply the partial isometry to the basis vector $\delta_e$. Although $V$ depends
 on $s$, we suppressed this dependence in Section~\ref{partisom}. However, we will maintain it in the notation $\xi_s$ below.

\begin{theorem}\label{thm:tracelike}
For $1<s\leq s_0$, let
\begin{equation}\label{eq:xi-definition}
 \xi_s:=V\delta_e\in A^2_{s-2}(\Hh).
\end{equation}
Then we have
\begin{equation}\label{eq:Parseval}
 \sum_{\gamma\in\Gamma}\bigl|\langle h,\pi_s(\gamma)\xi_s\rangle_s\bigr|^2
 =\|h\|_s^2\qquad(h\in A^2_{s-2}(\Hh)).
\end{equation}
Moreover,
\begin{equation}\label{eq:xi-tracelike}
 \sum_{\gamma\in\Gamma}\bigl|\pi_s(\gamma)\xi_s(z)\bigr|^2
 =\frac{s-1}{4\pi}y^{-s}.
\end{equation}
Finally,
\begin{equation}\label{eq:xi-norm}
 \|\xi_s\|_s^2=\frac{s-1}{4\pi}\covol(\Gamma)=\frac{s-1}{s_0-1}.
\end{equation}
At $s=s_0$, $\{\pi_{s_0}(\gamma)\xi_{s_0}:\gamma\in\Gamma\}$ is an orthonormal basis of $A^2_{s_0-2}(\Hh)$.
\end{theorem}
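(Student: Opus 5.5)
The plan is to derive everything from the intertwining relation \eqref{eq:V-equivariant} and the coisometry property $VV^*=I$ of Proposition~\ref{prop:polar}.

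\textbf{Parseval identity.} We first compute the orbit of $\xi_s$. Using the definition of $\lambda^{\sigma_s}_\gamma$ and the normalisation $\sigma_s(\gamma,e)=1$, we get
\[
\pi_s(\gamma)\xi_s=\pi_s(\gamma)V\delta_e=V\lambda^{\sigma_s}_\gamma\delta_e=V\delta_\gamma .
\]
Hence, for $h\in A^2_{s-2}(\Hh)$,
\[
\langle h,\pi_s(\gamma)\xi_s\rangle_s=\langle V^*h,\delta_\gamma\rangle_{\ell^2(\Gamma)},
\]
and this is the $\gamma$-th coordinate of $V^*h$. Parseval's identity in $\ell^2(\Gamma)$ then gives
\[
\sum_\gamma|\langle h,\pi_s(\gamma)\xi_s\rangle_s|^2=\|V^*h\|^2=\langle VV^*h,h\rangle_s=\|h\|_s^2,
\]
by \eqref{eq:polar-projections}. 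This proves \eqref{eq:Parseval}.

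\textbf{Pointwise identity.} Next, \eqref{eq:xi-tracelike} follows by specialising $h=K_{s,z}$. By \eqref{eq:reproducing}, $\langle K_{s,z},\pi_s(\gamma)\xi_s\rangle_s=\overline{\pi_s(\gamma)\xi_s(z)}$. By \eqref{eq:kernel-diagonal}, $\|K_{s,z}\|_s^2=\frac{s-1}{4\pi}y^{-s}$. Equivalently, this is Proposition~\ref{prop:tracelike-frame} with frame constant $1$, i.e.\ $C_\xi=\frac{s-1}{4\pi}$. We must also check that $\xi_s\neq0$. This follows because the left side of \eqref{eq:Parseval} would otherwise vanish for all nonzero $h$, a contradiction.

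\textbf{Norm and the critical case.} The norm formula \eqref{eq:xi-norm} is then \eqref{eq:tracelike-norm} with $C_\xi=\frac{s-1}{4\pi}$. To be safe, we would redo the unfolding explicitly. Multiply \eqref{eq:xi-tracelike} by $y^{s}$, integrate over a fundamental domain $\mathcal F$ against $dx\,dy/y^2$, and use $|\pi_s(\gamma)\xi(z)|^2\operatorname{Im}(z)^s=|\xi(\gamma^{-1}z)|^2\operatorname{Im}(\gamma^{-1}z)^s$. The $\Gamma$-translates of $\mathcal F$ tile $\Hh$, and monotone convergence justifies exchanging sum and integral. The result is $\|\xi_s\|_s^2=\frac{s-1}{4\pi}\covol(\Gamma)$, and $\covol(\Gamma)=4\pi/(s_0-1)$ gives the second equality. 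At $s=s_0$ we get $\|\xi_{s_0}\|=1$. Then \eqref{eq:Parseval} applied to $h=\pi_{s_0}(\beta)\xi_{s_0}$ yields
\[
1=\sum_\gamma|\langle\pi(\beta)\xi,\pi(\gamma)\xi\rangle|^2 ,
\]
where the $\gamma=\beta$ term already equals $1$, so all other terms vanish. Completeness then follows from \eqref{eq:Parseval}: any $h$ orthogonal to the whole orbit has $\|h\|=0$.

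\textbf{Main obstacle.} The only delicate point is the interchange of sum and integral in the unfolding, together with the local uniform convergence implicit in Proposition~\ref{prop:tracelike-frame}. Both are handled by positivity of the terms and monotone convergence, so we do not expect a serious difficulty.
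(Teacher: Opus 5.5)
Your proposal is correct and follows the paper's proof essentially step for step. It derives $\pi_s(\gamma)\xi_s=V\delta_\gamma$ from the intertwining relation, combines Parseval in $\ell^2(\Gamma)$ with $VV^*=I$, specialises to $h=K_{s,z}$ (the easy direction of Proposition~\ref{prop:tracelike-frame}), unfolds for the norm, and runs the orthonormality argument at $s=s_0$. Your small additions are sound: you check $\xi_s\neq 0$, and you apply \eqref{eq:Parseval} directly to $h=\pi_{s_0}(\beta)\xi_{s_0}$ so the multiplier phases never enter. Also, on your route the local uniform convergence you flag as delicate is not actually needed, since you only use the direction obtained by plugging in the reproducing kernel.
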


\begin{proof}
By \eqref{eq:V-equivariant},
\[
 \pi_s(\gamma)\xi_s=\pi_s(\gamma)V\delta_e=V\lambda_\gamma^{\sigma_s}\delta_e=V\delta_\gamma.
\]
Hence, Parseval's identity in $\ell^2(\Gamma)$, combined with $VV^*=I_{A^2_{s-2}(\Hh)}$, gives
\[
 \begin{aligned}
 \sum_{\gamma\in\Gamma}\bigl|\langle h,\pi_s(\gamma)\xi_s\rangle_s\bigr|^2
 &=\sum_{\gamma\in\Gamma}\bigl|\langle V^*h,\delta_\gamma\rangle_{\ell^2(\Gamma)}\bigr|^2\\
 &=\|V^*h\|_{\ell^2(\Gamma)}^2=\langle VV^*h,h\rangle_s=\|h\|_s^2.
 \end{aligned}
\]
This proves \eqref{eq:Parseval}. By Proposition \ref{prop:tracelike-frame}, this implies that $\xi_s$ is a tracelike vector with constant $C_{\xi}=(s-1)/(4 \pi)$ and therefore, \eqref{eq:xi-tracelike} holds. With \eqref{eq:tracelike-norm} we deduce \eqref{eq:xi-norm}. When $s=s_0$, the norm is one, and the last assertion of
 Proposition~\ref{prop:tracelike-frame} shows that the $\Gamma$-orbit is an orthonormal basis.
\end{proof}

\begin{corollary}\label{cor:critical-unitary}
For $s=s_0$, $V_{s_0}$ is unitary and
\[
 V_{s_0}\delta_\gamma=\pi_{s_0}(\gamma)\xi_{s_0}
 \qquad(\gamma\in\Gamma).
\]
 In particular,
\[
 V_{s_0}L_{\sigma_{s_0}}(\Gamma)V_{s_0}^*=M_{s_0},\qquad
 V_{s_0}L_{\sigma_{s_0}}(\Gamma)'V_{s_0}^*=M_{s_0}'.
\]
For $\Gamma=\PSL$, with the phase normalisation of Section~\ref{Parts12}, one has $s_0=13$ and $\sigma_{s_0}=1$. Then $V_{13}$ is the unitary $U_{\xi_{13}}$ of \eqref{eq:orbit-unitary}, and the map \eqref{eq:cusp-group-map} is obtained
 by substituting $U_\xi=V_{13}$ and $\xi=\xi_{13}$.
\end{corollary}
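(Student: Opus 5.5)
The plan is to derive the corollary from Theorem~\ref{thm:tracelike} and Proposition~\ref{prop:polar}. First, $V_{s_0}$ is unitary. By \eqref{eq:polar-projections} we already know $V_{s_0}V_{s_0}^*=I$, so it remains to show $V_{s_0}^*V_{s_0}=I$, that is, $\ker\mathcal C=\{0\}$. The proof of Theorem~\ref{thm:tracelike} gives $V_{s_0}\delta_\gamma=\pi_{s_0}(\gamma)\xi_{s_0}$ via \eqref{eq:V-equivariant}, which is exactly the displayed formula. At $s=s_0$ these vectors form an orthonormal basis. Therefore $\|V_{s_0}\delta_\gamma\|=1$ for every $\gamma$, and so $\langle V^*V\delta_\gamma,\delta_\gamma\rangle=1$. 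Since $V^*V=P$ is a projection, this forces $P\delta_\gamma=\delta_\gamma$ for all $\gamma$, hence $P=I$. Alternatively, $V$ sends an orthonormal basis to an orthonormal basis and is therefore unitary.

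Next, conjugate the algebras. From \eqref{eq:V-equivariant} and unitarity we get $V\lambda_\alpha^{\sigma}V^*=\pi_{s_0}(\alpha)$. Conjugation by a unitary is a $*$-isomorphism $B(\ell^2(\Gamma))\to B(A^2_{s_0-2}(\Hh))$, and it is a homeomorphism for the strong operator topology. It also preserves commutants: $V\mathcal S'V^*=(V\mathcal S V^*)'$. Consequently it preserves bicommutants, giving $VL_{\sigma}(\Gamma)V^*=\{\pi_{s_0}(\gamma)\}''=M_{s_0}$. Taking commutants once more gives $VL_\sigma(\Gamma)'V^*=M_{s_0}'$.

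For $\Gamma=\PSL$, the value $s_0=13$ is the Remark after Theorem~\ref{thm:Jones-zero}. With the phase normalisation of Section~\ref{Parts12}, $\pi_{13}$ is a genuine representation by \eqref{action}, so the multiplier in \eqref{eq:projective-law} is identically $1$ and $\lambda^{\sigma}=\lambda$. The construction of $\mathcal C$, $V$ and $\xi$ in Sections~\ref{C}--\ref{tracelikev} goes through verbatim with this renormalised $\pi$. Then $\xi_{13}$ is tracelike, and $U_{\xi_{13}}\delta_\gamma=\pi_{13}(\gamma)\xi_{13}=V_{13}\delta_\gamma$. Both operators are bounded and agree on a basis, so they are equal. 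Substituting into \eqref{eq:cusp-group-map} finishes the proof.

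The only real subtlety is justifying that the renormalised phases can be used in the construction. The cleanest route is to note that the proofs of Sections~\ref{C}--\ref{tracelikev} use only \eqref{eq:projective-law} with some multiplier $\sigma$, and $\sigma\equiv1$ is a permitted choice. Another point needing care is that the last assertion of Proposition~\ref{prop:tracelike-frame} gives orthonormality of the orbit, which we need for injectivity; otherwise everything is routine.
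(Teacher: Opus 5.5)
Your proposal is correct and follows the paper's own argument. The orbit $\{\pi_{s_0}(\gamma)\xi_{s_0}\}$ being an orthonormal basis makes $V_{s_0}$ unitary, the intertwining relation \eqref{eq:V-equivariant} transports the bicommutant and commutant, and agreement on the basis $\delta_\gamma$ identifies $V_{13}$ with $U_{\xi_{13}}$; your explicit point that Sections~3--5 must be run with the renormalised ($\sigma\equiv 1$) action for this last identification fills in a detail the paper leaves implicit, and you handle it correctly.
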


\begin{proof}
Theorem~\ref{thm:tracelike} shows that $V_{s_0}$ maps the standard orthonormal basis
 $(\delta_\gamma)_{\gamma\in\Gamma}$ onto the orthonormal basis
 $(\pi_{s_0}(\gamma)\xi_{s_0})_{\gamma\in\Gamma}$. The conjugation identities follow from \eqref{eq:V-equivariant}. For $\Gamma=\PSL$ with the stated phase normalisation, this is precisely the unitary map appearing in
 \eqref{eq:orbit-unitary}.
\end{proof}

\subsection{Approximation of the polar factor}
The definition $\xi_s=V\delta_e$ can be turned into norm-convergent approximations involving
 only $\mathcal C$ and its adjoint.

\begin{proposition}\label{prop:polar-integral}
For $R>0$, define the norm-convergent operator integral
\begin{equation}\label{eq:IR}
 I_R:=\int_0^R e^{-x\mathcal C\mathcal C^*}\mathcal C|\mathcal C|\,dx
 \in B\bigl(\ell^2(\Gamma),A^2_{s-2}(\Hh)\bigr).
\end{equation}
Then
\begin{equation}\label{eq:IR-limit}
 V=\operatorname*{s-lim}_{R\to\infty}I_R.
\end{equation}
Consequently,
\begin{equation}\label{eq:xi-integral}
 \xi_s=\lim_{R\to\infty}\left(\int_0^R
 e^{-x\mathcal C\mathcal C^*}\mathcal C|\mathcal C|\,dx\right)\delta_e
\end{equation}
in the norm of $A^2_{s-2}(\Hh)$.
\end{proposition}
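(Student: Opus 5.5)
The plan is to reduce everything to the functional calculus of the single positive operator $|\mathcal C|$ on $\ell^2(\Gamma)$, using the polar decomposition $\mathcal C=V|\mathcal C|$ of Proposition~\ref{prop:polar}.

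\emph{Step 1: the integral is well defined.} Since $\mathcal C$ is bounded (Proposition~\ref{prop:C-bounded}), the exponential $e^{-x\mathcal C\mathcal C^*}$ is given by its norm-convergent power series. The map $x\mapsto e^{-x\mathcal C\mathcal C^*}\mathcal C|\mathcal C|$ is norm-continuous on $[0,R]$, and its norm is bounded by $\|\mathcal C\|^2$ because $\mathcal C\mathcal C^*\ge0$. Hence $I_R$ exists as a norm-convergent Riemann (Bochner) integral in $B(\ell^2(\Gamma),A^2_{s-2}(\Hh))$.

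\emph{Step 2: move the exponential across $\mathcal C$.} We have $(\mathcal C\mathcal C^*)^n\mathcal C=\mathcal C(\mathcal C^*\mathcal C)^n$ for every $n$. Summing the exponential series termwise gives
\[
 e^{-x\mathcal C\mathcal C^*}\mathcal C=\mathcal C\,e^{-x\mathcal C^*\mathcal C}=\mathcal C\,e^{-x|\mathcal C|^2}.
\]
Pulling the bounded operator $\mathcal C$ out of the integral then yields
\[
 I_R=\mathcal C\,g_R(|\mathcal C|),\qquad g_R(t):=\int_0^R e^{-xt^2}t\,dx .
\]
Here $g_R(t)=(1-e^{-Rt^2})/t$ for $t>0$ and $g_R(0)=0$. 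The identification of the operator integral with $g_R(|\mathcal C|)$ follows from the continuous functional calculus, since the scalar integrand depends continuously on $(x,t)\in[0,R]\times[0,\|\mathcal C\|]$. Substituting $\mathcal C=V|\mathcal C|$ gives
\[
 I_R=V\,\varphi_R(|\mathcal C|),\qquad \varphi_R(t):=t\,g_R(t)=1-e^{-Rt^2}.
\]
Note that $\varphi_R$ is continuous and $\varphi_R(0)=0$.

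\emph{Step 3: pass to the limit.} As $R\to\infty$, $\varphi_R$ converges pointwise on $[0,\infty)$ to the indicator $\mathbf 1_{(0,\infty)}$, and $0\le\varphi_R\le1$. By the bounded convergence theorem for the spectral measure of $|\mathcal C|$, $\varphi_R(|\mathcal C|)\to\mathbf 1_{(0,\infty)}(|\mathcal C|)$ strongly. This limit is the projection onto $(\ker|\mathcal C|)^\perp=(\ker\mathcal C)^\perp$.

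By \eqref{eq:polar-projections}, $V P_{(\ker\mathcal C)^\perp}=V V^*V=V$. Since $V$ is bounded, $I_R\mathbf a\to V\mathbf a$ for every $\mathbf a$, which proves \eqref{eq:IR-limit}. Applying this to $\mathbf a=\delta_e$ and using \eqref{eq:xi-definition} gives \eqref{eq:xi-integral}.

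The only delicate point is Step 2. One must justify both the intertwining of the exponentials and the identification of the vector-valued integral with a spectral function of $|\mathcal C|$. I would handle both with norm-convergent power series and the continuous functional calculus, reserving the Borel functional calculus for the limit in Step 3. There only strong, not norm, convergence can be expected, because $0$ may lie in the spectrum of $|\mathcal C|$.
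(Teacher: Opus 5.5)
Your proposal is correct and follows essentially the same route as the paper: both use $\mathcal C=V|\mathcal C|$ and the functional calculus to reduce $I_R$ to $V\bigl(I-e^{-R|\mathcal C|^2}\bigr)$, then pass to the strong limit $P_{(\ker\mathcal C)^\perp}=V^*V$ and use $VV^*V=V$. Your extra justifications, namely the power-series intertwining $e^{-x\mathcal C\mathcal C^*}\mathcal C=\mathcal C e^{-x\mathcal C^*\mathcal C}$, the scalar-integral identification, and bounded convergence for the spectral measure, fill in details the paper leaves implicit.
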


\begin{proof}
The map $x\mapsto e^{-x\mathcal C\mathcal C^*}\mathcal C|\mathcal C|$ is norm-continuous on
 bounded intervals, so \eqref{eq:IR} is a Bochner integral. From $\mathcal C=V|\mathcal C|$ and
 functional calculus,
\[
 e^{-x\mathcal C\mathcal C^*}\mathcal C|\mathcal C|
 =Ve^{-x|\mathcal C|^2}|\mathcal C|^2.
\]
Therefore
\[
 I_R=V\bigl(I-e^{-R|\mathcal C|^2}\bigr).
\]
As $R\to\infty$, the factor in parentheses converges strongly to
 $P_{(\ker\mathcal C)^\perp}=V^*V$. Hence $I_R\to VV^*V=V$ strongly, proving
 \eqref{eq:IR-limit}. Applying the operators to $\delta_e$ gives \eqref{eq:xi-integral}.
\end{proof}

Mbekhta also gives a recursive approximation of the polar factor \cite[Corollary~3.1]{Mbekhta}.
 In the present two-Hilbert-space setting, equivalently after passing to the associated block
 operator, define
\begin{equation}\label{eq:polar-recursion}
 W_0:=\mathcal C|\mathcal C|,\qquad
 W_{n+1}:=W_n+\frac{1}{n+2}\bigl(W_0-\mathcal C\mathcal C^*W_n\bigr),
 \qquad n\geq0.
\end{equation}
Then $W_n\to V$ strongly, and hence
\begin{equation}\label{eq:xi-recursion}
 \xi_s=\lim_{n\to\infty}W_n\delta_e
\end{equation}
in Bergman-space norm. Thus the construction gives both an exact operator-theoretic formula
 and convergent procedures for computing the tracelike vector.

\end{document}